\newtheorem{theorem}{Theorem}[section]
\newtheorem{lemma}[theorem]{Lemma}
\newtheorem{remark}[theorem]{Remark}
\newtheorem{Conjecture}{Conjecture}
\newcommand{\dd}{\mathrm{d}}
\newcommand{\R}{{\mathord{\mathbb R}}}
\newcommand{\C}{{\mathord{\mathbb C}}}
\newcommand{\N}{{\mathord{\mathbb N}}}
\DeclareMathOperator{\sgn}{sgn}
\DeclareMathOperator{\dom}{Dom}
\DeclareMathOperator{\dist}{dist}
\definecolor{DarkBlue}{rgb}{0,0.1,0.7}
\newcommand\soutD{\bgroup\markoverwith
{\textcolor{DarkBlue}{\rule[.5ex]{2pt}{1pt}}}\ULon}
\newcommand{\Hm}[1]{\leavevmode{\marginpar{\tiny%
$\hbox to 0mm{\hspace*{-0.5mm}$\leftarrow$\hss}%
\vcenter{\vrule depth 0.1mm height 0.1mm width \the\marginparwidth}%
\hbox to
0mm{\hss$\rightarrow$\hspace*{-0.5mm}}$\\\relax\raggedright #1}}}
\title[Nodal sets of thin curved layers]{Nodal sets of thin curved layers}
\author{David Krej\v{c}i\v{r}\'{i}k \ and \ Mat\v{e}j Tu\v{s}ek}
\date{11 June 2014}
\address{
Department of Theoretical Physics,
Nuclear Physics Institute,
Academy of Sciences,
250\,68 \v{R}e\v{z}, Czech Republic
}
\email{krejcirik@ujf.cas.cz}
\address{
Department of Mathematics,
Faculty of Nuclear Sciences and Physical Engineering,
Czech Technical University in Prague,
Trojanova 13, 120\,00 Prague 2, Czech Republic
}
\email{tusekmat@fjfi.cvut.cz}
\begin{document}
\begin{abstract}
This paper is concerned with the location of nodal sets 
of eigenfunctions of the Dirichlet Laplacian 
in thin tubular neighbourhoods of hypersurfaces 
of the Euclidean space of arbitrary dimension.
In the limit when the radius of the neighbourhood tends to zero,
it is known that spectral properties of the Laplacian are
approximated well by an effective Schr\"odinger operator 
on the hypersurface with a potential expressed solely
in terms of principal curvatures.
By applying techniques of elliptic partial differential equations,
we strengthen the known perturbation results  
to get a convergence of eigenfunctions in H\"older spaces.
This enables us in particular to conclude that
every nodal set has a non-empty intersection with the boundary of 
the tubular neighbourhood.
\end{abstract}
\maketitle

\section{Introduction}
The celebrated \emph{nodal-line conjecture} of L.~E.~Payne's 
from 1967~\cite[Conj.~5]{Payne1}
states that any membrane with fixed boundary vibrating at 
the second lowest resonant frequency cannot have 
a closed nodal curve.
Mathematically, representing the membrane
by a bounded domain (i.e.\ open connected set) $\Omega \subset \R^2$, 
the resonant frequencies~$\sqrt{\lambda}$ 
and corresponding vibrating modes~$u$  
are obtained as solutions of the boundary-value problem
\begin{equation}\label{bv}
\left\{
\begin{aligned}
  -\Delta u &= \lambda u
  && \mbox{in} \quad \Omega \,,
  \\
  u &= 0
  && \mbox{on} \quad \partial\Omega \,.
\end{aligned}
\right.
\end{equation}
It is customary to arrange the eigenvalues 
in a non-decreasing sequence (counting multiplicities) 
$
  0 < \lambda_1 \leq \lambda_2 \leq \lambda_3 \leq \dots   
$
and choose the corresponding eigenfunctions 
$u_1, u_2, u_3, \dots$ real-valued.
It is well known that~$\lambda_1$ is non-degenerate 
and that~$u_1$ does not change sign in~$\Omega$.
Since the eigenfunctions are mutually orthogonal in $L^2(\Omega)$,
the \emph{nodal set} 
\begin{equation}\label{nodal.set}
  \mathcal{N}(u_n) := u_n^{-1}(\{0\}) 
\end{equation}
is non-trivial whenever $n \geq 2$
and forms peculiar shapes (also known as Chladni's patterns):
various crossing curves or closed loops.
The connected components of $\Omega\setminus\mathcal{N}(u_n)$
are called \emph{nodal domains} of~$u_n$.
It turns out that the shape of the nodal set
is related to acoustic properties of the membrane.
The conjecture then states that $\mathcal{N}(u_2)$
cannot form a closed curve, or more generally,
in any dimension and for an arbitrary~$\Omega$,
that the nodal set touches the boundary:
\begin{Conjecture}\label{Conj1}
$\overline{\mathcal{N}(u_2)} \cap \partial\Omega \not= \emptyset$
for any open domain $\Omega \subset \R^d$, $d \geq 2$.
\end{Conjecture}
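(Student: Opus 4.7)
The plan is to attempt a proof by contradiction, borrowing the strategies that succeeded for convex domains (Melas) and for simply connected planar domains (Payne--Alessandrini). Suppose $\overline{\mathcal{N}(u_2)} \cap \partial\Omega = \emptyset$. Then $\mathcal{N}(u_2)$ is a compact subset of $\Omega$, and by Courant's nodal domain theorem $u_2$ has exactly two nodal domains $\Omega_\pm := \{\pm u_2 > 0\}$; under our hypothesis at least one of them, say $\Omega_-$, is compactly contained in $\Omega$. On $\Omega_-$ the restriction $|u_2|$ is the positive Dirichlet ground state, so $\lambda_1(\Omega_-) = \lambda_2(\Omega)$, and strict domain monotonicity yields $\lambda_1(\Omega_-) > \lambda_1(\Omega)$, consistent with the hypothesis but suggesting wasted ``room'' between $\partial\Omega_-$ and $\partial\Omega$ that ought to be exploitable.

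Next I would attempt a variational contradiction by comparing $\Omega_-$ with a geometrically transported copy of itself inside~$\Omega$. Melas' device, available under convexity, is to reflect $\Omega_-$ through a suitable hyperplane cutting off a thin slice of $\Omega$ near the boundary, so that the reflection still lies inside $\Omega$; pasting $u_2|_{\Omega_-}$ to its signed reflection and extending by zero yields an $H^1_0(\Omega)$ test function which, after subtracting its $L^2$-projection onto $u_1$, has Rayleigh quotient strictly below $\lambda_2(\Omega)$ and contradicts the min--max principle. For simply connected planar $\Omega$, Alessandrini's alternative studies the zero sets of $\partial_{x_j} u_2$, themselves eigenfunctions, and invokes the Jordan curve theorem. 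A unified plan would therefore be: (i) locate an enclosed nodal domain $\Omega_-$; (ii) produce a volume-preserving transformation sending $\Omega_-$ into $\Omega$ and generating an admissible $H^1_0$-test function orthogonal to $u_1$; (iii) deduce a violation of the min--max characterisation of $\lambda_2(\Omega)$.

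The principal obstacle is structural rather than technical: as worded for \emph{arbitrary} open $\Omega \subset \R^d$, the conjecture is in fact \emph{known to be false}, by the counterexamples of Hoffmann-Ostenhof, Hoffmann-Ostenhof and Nadirashvili on certain multiply connected planar domains. Any instance of the above strategy must therefore fail at step~(ii) for such $\Omega$, because the non-trivial topology prevents a reflected or translated copy of $\Omega_-$ from fitting inside $\Omega$ without self-collision, and the resulting test function either ceases to be admissible or loses the required orthogonality. Consequently a proof of the conjecture verbatim either has to exhibit a hitherto-overlooked defect in the Hoffmann-Ostenhof--Nadirashvili construction, or to restrict to a class of $\Omega$ that precludes such counterexamples --- convexity, simple connectedness, certain symmetries, or, as in the present paper, a thin tubular structure; we adopt the last option in the sequel, where the norm-resolvent convergence to an effective operator on the hypersurface forces the nodal set to spread across the thin direction of $\Omega_\varepsilon$ and thereby meet $\partial\Omega_\varepsilon$.
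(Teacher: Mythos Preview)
Your assessment is essentially correct, but there is nothing to compare it against: the statement is a \emph{Conjecture} in the paper, not a theorem, and the paper does not prove it. Quite the opposite --- the paragraph immediately following Conjecture~\ref{Conj1} records that the conjecture as stated is \emph{false}: multiply connected planar counterexamples (Hoffmann-Ostenhof, Hoffmann-Ostenhof, Nadirashvili), unbounded counterexamples (Freitas--Krej\v{c}i\v{r}\'{\i}k), and even bounded simply connected counterexamples for $d\ge 3$ (Kennedy) are cited. You correctly diagnose this structural obstruction in your third paragraph, so as a ``proof proposal'' for the bare conjecture your write-up reaches the only honest conclusion available.

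What the paper actually establishes is the restricted Theorem~\ref{Thm.Conj}: the conclusion of the conjecture holds for $\varepsilon$-tubular neighbourhoods $\Omega_\varepsilon$ of a hypersurface~$\Sigma$, for all sufficiently small~$\varepsilon$. The mechanism is the one you gesture at in your last sentence, and it has nothing to do with the reflection/variational scheme sketched in your second paragraph: the norm-resolvent convergence of $-\Delta_D^{\Omega_\varepsilon}$ to the effective operator $h_{\mathrm{eff}}$ on $\Sigma$ is upgraded, via iterated elliptic estimates and Sobolev embedding, to local $C^0$- and then $C^{0,1}$-convergence of eigenfunctions in the transverse variable (Theorem~\ref{theo:Hk_conv} and the subsequent maximum-principle argument). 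This forces $\psi_n$ to change sign across the full fibre~$I$ near~$\mathcal{N}(\phi_n)$ (Theorem~\ref{theo:sign_change}), so its nodal set meets $\Sigma\times\partial I\subset\partial\Omega_\varepsilon$.

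One factual slip: the simply connected bounded planar case is, per the paper, still \emph{open}. Alessandrini's cited result is an alternative proof for \emph{convex} planar domains, not for simply connected ones, so your attribution of a general simply-connected-planar argument to ``Payne--Alessandrini'' overstates what is known.
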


If~$\Omega$ is not sufficiently regular or it is unbounded,
\eqref{bv}~should be interpreted as a spectral problem
for the Dirichlet Laplacian $-\Delta_D^{\Omega}$ 
in the Hilbert space $L^2(\Omega)$.  
The conjecture still makes sense even for unbounded~$\Omega$,
provided that two eigenvalues below the essential spectrum exist.
We note that the spectral problem~\eqref{bv} 
for unbounded tubular domains arises 
in the context of stationary Schr\"odinger equation
in \emph{quantum waveguides}
(see \cite{DE,KKriz,Haag-Lampart-Teufel_2014} 
and references therein),
where the eigenfunctions have the physical interpretation 
of quantum bound states. 

Since the solutions of~\eqref{bv} are analytic in~$\Omega$, 
each $\mathcal{N}(u_n)$ decomposes into the disjoint union
of an analytic $(d-1)$-dimensional manifold
and a singular set contained in a countable number
of analytic $(d-2)$-dimensional manifolds
(cf.~\cite{Caffarelli-Friedman_1985}).
The \emph{Courant nodal domain theorem} then states that,
if the boundary~$\partial \Omega$ is sufficiently regular,
the $n$th~eigenfunction~$u_n$
has at most~$n$ nodal domains
(cf.~\cite{CH1,Chavel,Alessandrini_1998}).
In particular, $u_2$~has exactly two nodal domains.

The restriction to the second eigenfunction in Conjecture~\ref{Conj1} 
is essential because higher eigenfunctions are known 
to have nodal sets which possibly do not intersect the boundary.
As a matter of fact, there exist counterexamples even
to Conjecture~\ref{Conj1} as stated here,
given by domains~$\Omega$ 
which are multiply connected 
\cite{H2ON,Fournais,Kennedy_2011}
or unbounded \cite{FK2}. 
More significantly, 
it has been shown recently by Kennedy~\cite{Kennedy_2013} 
that the conjecture does not hold for $d \geq 3$, 
even if the domain~$\Omega$ is bounded and simply connected.
The current status is that the validity of Conjecture~\ref{Conj1} 
constitutes an open problem ``only'' for 
simply connected bounded domains in $d=2$.

Although the conjecture is known to be violated in general,
it is important to identify domains for which it is satisfied,
and moreover, provide information on the location
and geometry of the nodal set.
The most general result obtained so far
was given by Melas~\cite{Melas},
who showed that Conjecture~\ref{Conj1} holds in the case
of convex planar domains (cf.~also~\cite{Alessandrini_1994}).
Independently of Melas, Jerison proved Conjecture~\ref{Conj1}
in the case of sufficiently \emph{long and thin convex} 
planar domains~\cite{Jerison0} 
and later extended the proof to higher dimensions~\cite{Jerison}.
Furthermore, Jerison's method enabled him to locate 
the nodal set in the two-dimensional case
near the zero of an ordinary differential equation
associated to the convex domain in a natural way~\cite{Jerison1}
(cf.~also \cite{GJerison,Grieser-Jerison_2009}).

The only positive result for non-convex 
and possibly multiply connected domains in any dimension
is that of Freitas and one of the present authors
obtained for \emph{thin curved tubes} of arbitrary cross-sec\-tion~\cite{FK4}.
The result resembles that of Jerison's in that 
we also consider thin domains
and locate the nodal set near the zeros of an ordinary differential equation
associated to the curved tube, however, 
the technical approach is actually very different.
Instead of a usage of a trial function
and refined applications of the maximum principle 
developed by Jerison, we rather rely on a singular
perturbation theory in a Hilbert-space setting
(although the maximum principle, in addition to other techniques,
is also used eventually).
Moreover, our method works in all dimensions 
and also for higher eigenfunctions. 

The objective of the present paper is to prove Conjecture~\ref{Conj1}
for \emph{thin curved layers}~$\Omega_\varepsilon$, 
i.e.\ $\varepsilon$-tubular neighbourhoods 
of hypersurfaces~$\Sigma$ in~$\R^d$, 
with $d \geq 2$ and $\varepsilon \to 0$.
The difference with respect to thin tubes considered in~\cite{FK4},
where the underlying manifold was just a curve 
(of dimension~$1$ and codimension~$d-1$),
is that now the asymptotic geometry is that of a manifold
of dimension $d-1$ (and codimension~$1$).
It leads both to conceptual and technical complications.
Indeed, the location of the nodal sets is presently determined
by a \emph{partial} differential equation associated to the hypersurface,
which itself could have closed nodal sets
(even if $n=2$, cf.~\cite{Freitas1}).
More importantly, the method of~\cite{FK4} 
to pass from $L^2$- to $C^0$-estimates 
needed for the location of the nodal sets
does not work in this more complex situation.
Therefore new ideas that we explain below had to be developed.
Finally, contrary to~\cite{FK4} we do not exclude from the present paper
the possibility that the domain~$\Omega_\varepsilon$ is unbounded,
so that some of the results are new even in $d=2$, 
when the geometrical setting coincides with that of~\cite{FK4}.

Let us state a consequence of the results established 
in this paper.
\begin{theorem}\label{Thm.Conj}
Let~$\Omega_\varepsilon$ be an $\varepsilon$-tubular neighbourhood 
of a smooth hypersurface~$\Sigma$ in~$\R^d$, $d \geq 2$. 
Assume hypotheses \ref{eq:eps_bound}, \ref{eq:res_conv_ass},
and \ref{eq:reg_ass_final} stated below.
If~$\Sigma$ is unbounded, let us also assume that 
the Dirichlet Laplacian $-\Delta_D^{\Omega_\varepsilon}$ 
possesses $m\geq 2$ 
eigenvalues below the essential spectrum
for all sufficiently small~$\varepsilon$.
Then for any $N\in\{2,3,\ldots\}$ or $N\in\{2,3,\ldots,m\}$ 
in the bounded or unbounded case, respectively, there exists a positive constant
$\varepsilon_0$ 
depending on the geometry of~$\Sigma$ 
such that 
$$
  \overline{\mathcal{N}(u_n)} \cap \partial\Omega_\varepsilon \not= \emptyset
$$
holds for all $\varepsilon \leq \varepsilon_0$
and every $n \in \{2,\dots,N\}$.
In particular, 
Conjecture~\ref{Conj1} holds for~$\Omega_\varepsilon$ 
whenever $\varepsilon \leq \varepsilon_0$. 
\end{theorem}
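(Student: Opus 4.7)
The plan is to argue by contradiction, combining the H\"older-convergence machinery developed in the preceding sections with the observation that the $n$-th eigenfunction of the effective Schr\"odinger operator on~$\Sigma$ must change sign as soon as $n\geq 2$. Suppose that for some sufficiently small~$\varepsilon$ and some $n\in\{2,\dots,N\}$ the closure $\overline{\mathcal{N}(u_n)}$ is disjoint from $\partial\Omega_\varepsilon$. Work in tubular coordinates $(s,t)\in\Sigma\times(-\varepsilon,\varepsilon)$, rescale the transverse variable to $\tau=t/\varepsilon$ and renormalise so as to obtain a function $\tilde u_n^\varepsilon$ on the fixed cylinder $\Sigma\times(-1,1)$. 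By the preceding convergence results, $\tilde u_n^\varepsilon$ tends to the product $\psi_n(s)\cos(\pi\tau/2)$, where $\psi_n$ is the $n$-th eigenfunction of the effective operator on~$\Sigma$. Being $L^2$-orthogonal to a strictly positive ground state, $\psi_n$ necessarily changes sign, so one may fix $s^{+},s^{-}$ in the same connected component of~$\Sigma$ with $\psi_n(s^{+})>0>\psi_n(s^{-})$.

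The main analytic ingredient is to upgrade the given H\"older convergence to $C^1$ convergence up to the boundary $\tau=\pm 1$. Because $\tilde u_n^\varepsilon$ solves an elliptic boundary-value problem with smooth coefficients and a right-hand side bounded uniformly in $C^{0,\alpha}$, standard Schauder bootstrapping yields uniform $C^{k,\alpha}$ estimates; combined with uniqueness of the limit, this promotes the convergence to $C^{k,\alpha}$ for any~$k$. Specialising to the first transverse derivative one obtains $\partial_\tau \tilde u_n^\varepsilon(s,1)\to -(\pi/2)\,\psi_n(s)$ uniformly in~$s$, and hence $\partial_\tau \tilde u_n^\varepsilon(s^{+},1)<0$ and $\partial_\tau \tilde u_n^\varepsilon(s^{-},1)>0$ for all sufficiently small~$\varepsilon$.

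These strict signs, combined with the Dirichlet condition $\tilde u_n^\varepsilon(\cdot,1)=0$ via a first-order Taylor expansion in~$\tau$ at $\tau=1$, provide an interval $[\tau_{*},1)$ on which $\tilde u_n^\varepsilon(s^{+},\tau)>0>\tilde u_n^\varepsilon(s^{-},\tau)$. For each such~$\tau$, continuity of $s\mapsto \tilde u_n^\varepsilon(s,\tau)$ and an intermediate-value argument along a path in~$\Sigma$ joining $s^{+}$ to $s^{-}$ furnish a zero $z(\tau)\in\Sigma$ of $\tilde u_n^\varepsilon(\cdot,\tau)$. Letting $\tau\uparrow 1$ and extracting a convergent subsequence $z(\tau_k)\to z^{*}$, the pair $(z^{*},1)$ lies in $\overline{\mathcal{N}(\tilde u_n^\varepsilon)}\cap(\Sigma\times\{1\})$, and its preimage under the tubular/rescaling correspondence is a point of $\overline{\mathcal{N}(u_n)}\cap\partial\Omega_\varepsilon$, contradicting the standing assumption.

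The principal obstacle is the uniform $C^1$ upgrade up to the boundary: the rescaled operator carries an $\varepsilon$-dependent principal part that mixes the transverse Laplacian at scale~$1$ with the intrinsic Laplacian on~$\Sigma$ at scale~$\varepsilon$, so obtaining Schauder estimates uniform in~$\varepsilon$ requires precisely the technical hypotheses listed in the statement and the H\"older-convergence theorem proved earlier in the paper. In the unbounded situation one further confines the sign-change argument to a sufficiently large compact portion of~$\Sigma$ on which the limit eigenfunctions are bounded away from zero, which is permissible because the finitely many discrete eigenfunctions decay at infinity. Once that uniform boundary regularity is in place, the topological intermediate-value argument above is essentially automatic.
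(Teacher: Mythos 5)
Your topological endgame — use compactness of a piece of $\Sigma$ to extract a zero $z(\tau_k)\to z^{*}$ as $\tau_k\to 1$ and conclude that $(z^{*},1)$ lies in the closure of the nodal set on the boundary — matches the role of the paper's Theorem~\ref{theo:sign_change}, which directly shows that $x\mapsto\psi_n(x,u)$ changes sign on a fixed tubular neighbourhood of $\mathcal{N}(\phi_n)$ for every $u\in I$, so that letting $u\to\pm 1$ forces a boundary touch. But the central analytic step you rely on has a genuine gap.

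You assert that ``standard Schauder bootstrapping yields uniform $C^{k,\alpha}$ estimates'' for the rescaled problem and hence $C^1$ convergence of $\tilde u_n^\varepsilon$ up to $\tau=\pm 1$, including convergence of the boundary normal derivative to $-(\pi/2)\psi_n$. This is precisely the step that fails: the operator $H$ on $\Sigma\times I$ has the form $-\,|g|^{-1/2}\partial_\mu|g|^{1/2}G^{\mu\nu}\partial_\nu-\varepsilon^{-2}\partial_u^2+V$. Multiplying by $\varepsilon^2$ to normalise the transverse part makes the tangential principal coefficients $\varepsilon^2 G^{\mu\nu}$, which degenerate as $\varepsilon\to 0$; not multiplying leaves the $\varepsilon^{-2}$ blow-up. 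In neither normalisation is the operator uniformly elliptic on $\Sigma\times I$, so interior and boundary Schauder estimates are \emph{not} uniform in $\varepsilon$, and no amount of ``bootstrapping'' produces a uniform $C^{1}$ bound for free. Indeed, the paper flags exactly this as the main obstacle (``the method of [FK4] to pass from $L^2$- to $C^0$-estimates \ldots\ does not work in this more complex situation'') and the whole of Section~3 is devoted to circumventing it. Observing that it ``requires precisely the technical hypotheses listed in the statement and the H\"older-convergence theorem proved earlier'' is circular, and in any case the paper's H\"older theorem gives only $C^{0,\gamma}$ with $\gamma<1$, not $C^1$.

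What the paper actually uses in place of a uniform $C^1$ bound is two separate, weaker ingredients that suffice: (i) interior (and partial boundary) $\mathcal{H}^k$-estimates obtained by energy methods adapted to the anisotropic $\varepsilon^{-2}$-structure, feeding into the Sobolev embedding to give $C^{0,\gamma}$ convergence with $\gamma<1$; and (ii) a transverse Lipschitz estimate $|\psi(x_\shortparallel,u)|\le K\varepsilon\,|u\pm 1|$ obtained not from Schauder theory but from a maximum-principle comparison with an explicit supersolution $w$ (Section~3.6). Combined with the Hopf-lemma lower bound $|\phi_n(x_\shortparallel)|\ge A\,\dist(x_\shortparallel,\mathcal{N}(\phi_n))$ near the nodal set (Lemma~\ref{lem:cut} and \eqref{eq:phi_N_bound}) and the factorisation $\psi_n^0=\phi_n\otimes\chi_1$ with $\chi_1(u)\sim\mathrm{const}\cdot\dist(u,\partial I)$, this gives $\psi_n\sgn\phi_n/\dist(u,\partial I)\ge A\,\dist(x_\shortparallel,\mathcal{N}(\phi_n))-K\varepsilon$, which yields the sign change at every transverse level $u$ uniformly up to $\partial I$. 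If you want to keep your $C^1$-and-Taylor strategy, you would need to supply a genuine proof of the uniform boundary $C^1$ estimate — for instance a one-dimensional ODE argument in the $u$-variable with the tangential terms treated as lower order, which is in effect what the paper's supersolution argument accomplishes — rather than invoking classical Schauder theory.
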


As a matter of fact, Theorem~\ref{Thm.Conj} follows as a consequence
of a stronger result, Theorem~\ref{theo:sign_change}, 
in which we establish a \emph{local convergence}
of nodal sets for all eigenfunctions
of $-\Delta_D^{\Omega_\varepsilon}$
to nodal sets of eigenfunctions of a sum of
a $(d-1)$-dimensional Schr\"odinger operator~$h_\mathrm{eff}$ 
in~$L^2(\Sigma)$, cf.~\eqref{h.eff}, 
and the one-dimensional Dirichlet Laplacian in the cross-section.

The structure of the paper is as follows.
In the next Section~\ref{Sec.pre} 
we collect basic facts and assumptions
about the geometry of the hypersurface~$\Sigma$ 
and its tubular neighbourhood~$\Omega_\varepsilon$.  
We also introduce the effective 
Schr\"odinger operator~$h_\mathrm{eff}$ in~$L^2(\Sigma)$,
which governs the location of nodal sets in~$\Omega_\varepsilon$
for small~$\varepsilon$, and recall from~\cite{KRT}
a result on a \emph{norm-resolvent convergence} 
of $-\Delta_D^{\Omega_\varepsilon}$ to~$h_\mathrm{eff}$.
After an appropriate identification of Hilbert spaces,
the operator convergence implies the convergence 
of the respective eigenfunctions in $L^2(\Omega_\varepsilon)$,
however, this convergence is not sufficient to deduce 
the convergence of nodal sets.

Section~\ref{Sec.conv} consists of a number of subsections
devoted to a passage from the $L^2$-convergence 
to a \emph{uniform}, $C^0$-convergence. 
The main idea is to adapt techniques known 
from elliptic regularity theory 
for partial differential equations
in order to establish a convergence of eigenfunctions
in higher-order Sobolev spaces and apply the Sobolev embedding theorem. 
We believe that our approach is of independent interest
because we deal with a highly singular problem
when some coefficients of the partial differential equation
depends on~$\varepsilon^{-2}$.
In the last subsection, we additionally use the maximum principle
to deduce a $C^{0,1}$-convergence needed for the convergence
of nodal sets.

The local convergence of nodal sets 
is deduced from the previous convergence results 
in Section~\ref{Sec.nodal}. The main result of this paper 
is given by Theorem~\ref{theo:sign_change} there.
The last Section~\ref{Sec.special} deals with a special class
of underlying manifolds $\Sigma$ for which 
one can extend the local results of 
Section~\ref{Sec.nodal}
to a \emph{global convergence} along the whole nodal set. 
For this class of manifolds,
we also demonstrate the convergence (in the Hausdorff sense) of nodal domains
employing the Courant nodal domain theorem.

When finishing this paper a related work of Lampart~\cite{Lampart} appeared.
On the one hand, it is concerned with 
the much more general geometrical setting of the Laplacian
on a diminishing fibre bundle of arbitrary dimension. 
On the other hand, the base space of the fibre
is assumed to be compact and of dimension at most~$3$ in~\cite{Lampart},
which would correspond to the restriction to compact $\Sigma$ and $d\leq 4$
in our setting. It is remarkable that we do not need these constraints
to prove Conjecture~\ref{Conj1}, but our fibre is always one-dimensional 
(although there seem not to be any technical limitations 
with extending our methods to higher-dimensional cross-sections, 
cf.~\cite{FK4}).
The present work and~\cite{Lampart} are also independent 
from the point of view of the technical handling of the limit $\varepsilon \to 0$.
While Lampart starts with an adiabatic perturbation theory 
developed for fibre bundles in \cite{Lampart-Teufel}
and finally ends up with $\mathcal{H}^1$-estimates,
we rather rely on the norm-resolvent convergence established in~\cite{KRT}
and proceed to higher-order Sobolev spaces.

\section{Notation and preliminary results}\label{Sec.pre}
At the beginning let us point out that we will use both the ``prefix'' and
``postfix'' notation for partial derivatives. In the prefix notation a partial
derivative acts on the whole expression to the right, whereas in the postfix
notation a partial derivative acts only on the closest term to the left. This
convention shall save us from the over-bracketing.

\subsection{Geometry of the layer}
Mostly we adopt the notation of \cite{KRT}. In particular let $\Sigma$ be a
connected orientable $C^l$-hypersurface with $l\geq 4$  in $\R^d$ ($d\geq 2$)
with the Riemannian metric $g$ induced by the embedding. It may or may not have
a boundary, if it does, then we assume that the boundary is also $C^l$-smooth.
The orientation is given by a globally defined unit normal field $n:\Sigma\to
S^{d-1}$. For any $p\in\Sigma$, we introduce the Weingarten map
$$ L: T_p\Sigma \to T_p\Sigma: \  \big\{\xi \mapsto -\dd n(\xi)\big\}.$$
If $x^1,\ldots, x^{d-1}$ are some local coordinates on $\Sigma$, 
then $L$ in the coordinate frame $(\partial_1,\ldots, \partial_{d-1})$ (where we
abbreviate $\partial_\mu=\partial_{x^{\mu}}$) has the matrix representation
$(L^{\mu}_{\ \nu})$ that reads $L^{\mu}_{\ \nu} = g^{\mu\rho} h_{\rho\nu}$,
where $(g^{\mu\rho})$ and $(h_{\rho\nu})$ is the matrix representation of
$g^{-1}$ and the second fundamental form of $\Sigma$, respectively. We set
$|g|:=\det{g}$.

The eigenvalues of $L$ are the principal curvatures,
$\kappa_1,\ldots,\kappa_{d-1}$, of $\Sigma$. 
Re\-mark that they are in general given only locally. However, there are $d-1$
invariants of $L$ given by the formula
$$
  K_{\mu}:=\binom{d-1}{\mu}^{-1}
  \sum_{\alpha_{1}<\ldots< \alpha_{\mu}}
  \kappa_{\alpha_{1}}\ldots\kappa_{\alpha_{\mu}},\qquad\mu=1,\ldots,d-1,
$$
that are globally defined $C^{l-2}$-smooth functions. 

Given $I:=(-1,1)$ and $\varepsilon>0$, we define a layer $\Omega_\varepsilon$ of
width $2\varepsilon$ along $\Sigma$ as the interior of the image of the mapping
\begin{equation}\label{eq:layer_def}
  \mathscr{L}: \ 
  \Sigma\times I\to\R^d: \ \big\{(x,u)\mapsto x+\varepsilon u n\big\},
\end{equation}
i.e., 
$$
  \Omega_\varepsilon
  :=\mathrm{int}\,\mathscr{L}(\Sigma\times I).
$$
We always assume that $\Omega_\varepsilon$ does not overlap itself, i.e.,
\begin{equation}\label{eq:eps_bound} \tag*{$\langle A1\rangle$}
  \varepsilon<\varrho_{m}
  :=\big(\max\big\{\|\kappa_{1}\|_{\infty},
  \ldots,\|\kappa_{d-1}\|_{\infty}\big\}\big)^{-1}
   \ \& \quad
  \mathscr{L} \mbox{ is injective},
\end{equation}
which implicitly involves the assumption 
that the principal curvatures are bounded.

In the sequel we set $x^d:=u$, $\partial_{d}=\partial_{u}$, and
$x_{\shortparallel}=(x^1,\ldots,x^{d-1})$. 
Also, unless otherwise stated, 
we will always assume the range of Greek and Latin indices to be $1,\ldots,d-1$
and $1,\ldots,d$, respectively. 

The metric induced by \eqref{eq:layer_def} is of the following block form, when
written in the coordinate frame $(\partial_1,\ldots, \partial_d)$, 
\begin{equation}\label{eq:layer_metric}
  (G_{ij})=
  \begin{pmatrix}
         (G_{\mu\nu}) & 0\\0 & \varepsilon^2
  \end{pmatrix},
  \qquad 
  G_{\mu\nu}=g_{\mu\rho}
  (\delta^{\rho}_{\sigma} -\varepsilon u L^{\rho}_{\ \sigma})
  (\delta^{\sigma}_{\nu} -\varepsilon u L^{\sigma}_{\ \nu}).
\end{equation}

In a straightforward manner one can derive the following matrix estimate
\begin{equation}\label{eq:metric_est}
 C_{-}g_{\mu\nu}\leq G_{\mu\nu}\leq C_{+}g_{\mu\nu}.
\end{equation}
with $C_{\pm}:=(1\pm\varepsilon\varrho_{m}^{-1})^2=1+\mathcal{O}(\varepsilon)$.

\subsection{Dirichlet Laplacian}
The Dirichlet Laplacian, $-\Delta_{D}^{\Omega_{\varepsilon}}$, on
$\Omega_{\varepsilon}$ is the self-adjoint operator 
in $L^2(\Omega_{\varepsilon})$
associated with the quadratic form
$$
  Q[\psi]:=\|\nabla\psi\|_{L^2(\Omega_{\varepsilon})}^2
  \,, \qquad
  \dom{Q}:=\mathcal{H}_{0}^{1}(\Omega_{\varepsilon})
  \,.
$$
It was demonstrated in \cite{KRT} that $-\Delta_{D}^{\Omega_{\varepsilon}}$ is
unitarily equivalent to 
\begin{equation*}
  H :=U(-\Delta_{D}^{\Omega_{\varepsilon}})U^{-1} 
  =
-|g|^{-1/2}\partial_{\mu}|g|^{1/2}G^{\mu\nu}\partial_{\nu}-\varepsilon^{-2}
\partial_{u}^{2}+V,
\end{equation*}
acting in
$\mathscr{H}:=L^{2}(\Sigma\times I,|g|^{1/2}\dd x^1\ldots\dd x^d)$ 
with $\dom{H^{1/2}}=U[\dom{Q}]$, where
$U:\psi\mapsto\mathrm{e}^J (\psi\circ\mathscr{L})$
with
\begin{align*}
 & J := \frac{1}{4} \ln\frac{|G|}{|g|} 
  = \frac{1}{2}\ln\left[1+\sum_{\mu=1}^{d-1}(-\varepsilon u)^{\mu} 
  \binom{d-1}{\mu}K_{\mu}\right] , \\
 & V := |g|^{-1/2} \partial_{i}|g|^{1/2} G^{ij} \partial_{j} J 
 + J_{,i} G^{ij} J_{,j}\, .
\end{align*}

\begin{remark}\label{rem:metric_cont}
Under our regularity assumption on $\Sigma$, the entries of $(g^{\mu\nu})$ are
$C^{l-1}$-functions, the entries of $(L^{\mu}_{\ \nu})$ and $(G^{\mu\nu})$ are
$C^{l-2}$-functions, 
and $V$ is only a $C^{l-4}$ function in any local
coordinates. In some more detail, the entries of $(G^{\mu\nu})$ are polynomials
in the parameter $\varepsilon$ with $C^{l-2}$-smooth coefficients and
$\lim_{\varepsilon\to 0}G^{\mu\nu}=g^{\mu\nu}$. This may be easily observed,
e.g. from the Cayley-Hamilton theorem that states
 $$\prod_{i=0}^{d-1}(\kappa_i-L)=0,$$
 and so
 \begin{equation}\label{eq:Cal_Ham}
 0=\prod_{i=0}^{d-1}(\mathit{Id}-\varepsilon u L+(\varepsilon u\kappa_i
-1)\mathit{Id})=\sum_{i=0}^{d-1}A_i(\mathit{Id}-\varepsilon u L)^i,
 \end{equation}
 where $A_{i}$ is a product of a polynomial in $\{\varepsilon u\kappa_{j},\
j=1,\ldots,d-1\}$ and the identity map, $\mathit{Id}$. In particular,
 $A_{d-1}=\mathit{Id}$ and $A_0=\prod_{i=1}^{d-1}(\varepsilon u\kappa_i-1)
\mathit{Id}$. Under the assumption \ref{eq:eps_bound}, $A_0\neq 0$. Consequently
we may compute 
 $(\mathit{Id}-\varepsilon u L)^{-1}$ 
from \eqref{eq:Cal_Ham}
as a finite sum and then substitute the result to the formula for
$(G^{\mu\nu})$, which is easy to derive from \eqref{eq:layer_metric}.
\end{remark}

\subsection{Effective Hamiltonian}
Let $(E_m,\chi_m),\ m\in\N \equiv \{1,2,\dots\}$, 
be the $m$th eigenpair 
of $-\partial_{u}^{2}$ in $L^2(I)$, 
subject to the Dirichlet boundary condition on $\partial I$. 
We have $E_m=(m\pi/2)^2$ and, in particular, we choose
\begin{equation}\label{chi1}
  \chi_{1}(u) := \cos(\pi u/2).
\end{equation}
For small values of $\varepsilon$, $H$ behaves like
\begin{equation*}
  H_{0}:= h_{\mathrm{eff}}-\varepsilon^{-2}\partial_{u}^{2} 
  \ \simeq \ 
  h_{\mathrm{eff}} \otimes 1  + 1 \otimes (-\varepsilon^{-2}\partial_{u}^{2})
\end{equation*}
acting on $\mathscr{H}=L^2(\Sigma,|g|^{1/2}\dd x^1\ldots\dd x^{d-1})\otimes
L^2(I,\dd u)$ with the form domain $\dom{H_0^{1/2}}:= \dom{H^{1/2}}$.
Here the so-called effective Hamiltonian $h_{\mathrm{eff}}$ reads
\begin{equation}\label{h.eff}
 h_{\mathrm{eff}}=
-\Delta_{g}+V_{\mathrm{eff}},\quad\dom{h_{\mathrm{eff}}}=\dom{-\Delta_g},
\end{equation}
where
$-\Delta_{g} := -|g|^{-1/2}\partial_{\mu}|g|^{1/2}g^{\mu\nu}\partial_{\nu}$ 
is the Laplace-Beltrami operator on $\Sigma$ subject to 
Dirichlet boundary conditions and 
\begin{equation*}
 V_{\mathrm{eff}}:=-\frac{1}{2}\sum_{\mu=1}^{d-1}\kappa_{\mu}^{2}+\frac{1}{4}
\left(\sum_{\mu=1}^{d-1}\kappa_{\mu}\right)^2=\binom{d-1}{2}K_2-\left(\frac{d-1}
{2}\,K_1\right)^2\, .
\end{equation*}
To be more precise, we reproduce here Theorems 6.3 and 7.1 of \cite{KRT} in a
slightly modified form.
\begin{theorem}[\cite{KRT}]\label{Thm.KRT} 
Let $H_{\mathrm{ren}}:=H-\varepsilon^{-2} E_1$,
$H_{0,\mathrm{ren}}:=H_0-\varepsilon^{-2} E_1$.
Assume \ref{eq:eps_bound} and
\begin{equation}\tag*{$\langle A2\rangle$}\label{eq:res_conv_ass}
|\nabla_{\!g}\kappa_{\mu}|_{g}
  \,, \
  \Delta_{g}\kappa_{\mu}  \in L^{\infty}(\Sigma),
\end{equation}
where $|\nabla_{\!g}f|_{g} := \sqrt{ f_{,\mu} g^{\mu\nu} f_{,\nu} }$.
Then  for any $k\in\mathrm{Res}\,H_{0,\mathrm{ren}}$, 
positive constants~$\varepsilon_0$ and~$C$ exist such that, 
for all $\varepsilon<\varepsilon_0$,
$k\in\mathrm{Res}\,H_{\mathrm{ren}}$ and
\begin{equation}\label{eq:res_conv}
   \|(H_{\mathrm{ren}}+k)^{-1}-(H_{0,\mathrm{ren}}+k)^{-1}\|\leq C\varepsilon .
\end{equation}
Similarly,  for any $k\in\mathrm{Res}\,h_{\mathrm{eff}}$, positive constants
$\tilde\varepsilon_0$ and $\tilde C$ exist such that, for all
$\varepsilon<\tilde\varepsilon_0$,
$k\in\mathrm{Res}\,H_{\mathrm{ren}}$ and
\begin{equation*}
   \|(H_{\mathrm{ren}}+k)^{-1}-(h_{\mathrm{eff}}+k)^{-1}\oplus 0\|\leq \tilde
C\varepsilon .
\end{equation*}
\end{theorem}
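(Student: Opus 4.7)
The plan is two-step: first compare $H_{\mathrm{ren}}$ with $H_{0,\mathrm{ren}}$, which act on the same Hilbert space $\mathscr{H}$, via a quadratic-form perturbation argument, and then compare $H_{0,\mathrm{ren}}$ with $h_{\mathrm{eff}}\oplus 0$ by exploiting the transverse mode decomposition of $-\partial_u^2$. The singular character of the perturbation—tangential operators with $O(1)$ coefficients against a transverse part scaled by $\varepsilon^{-2}$—has already been tamed by the renormalization $-\varepsilon^{-2}E_1$, so the remaining task is to extract the clean $O(\varepsilon)$ rate.

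For the first estimate, the key observation is that the singular term $\varepsilon^{-2}\|\partial_u\psi\|^2$ appears identically in the forms $t$ of $H_{\mathrm{ren}}$ and $t_0$ of $H_{0,\mathrm{ren}}$, so it cancels in their difference, which reduces to
\[
t[\psi]-t_0[\psi]=\int_{\Sigma\times I}(G^{\mu\nu}-g^{\mu\nu})\,\partial_\mu\bar\psi\,\partial_\nu\psi\,|g|^{1/2}\dd x^1\cdots\dd x^d+\int_{\Sigma\times I}(V-V_{\mathrm{eff}})|\psi|^2|g|^{1/2}\dd x^1\cdots\dd x^d.
\]
By Remark~\ref{rem:metric_cont}, $G^{\mu\nu}-g^{\mu\nu}=\varepsilon u\,M^{\mu\nu}_\varepsilon$ with $M^{\mu\nu}_\varepsilon$ uniformly bounded, and assumption~\ref{eq:res_conv_ass} is precisely what the explicit formula for $V$ requires in order for $V$ to be uniformly bounded and for $V-V_{\mathrm{eff}}=O(\varepsilon)$ to hold in $L^\infty(\Sigma\times I)$. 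Combining with \eqref{eq:metric_est} to dominate the tangential gradient by $t_0$ itself, one arrives at a KLMN-type bound
\[
|t[\psi]-t_0[\psi]|\le C\varepsilon\bigl(t_0[\psi]+K\|\psi\|^2\bigr)
\]
uniformly in small $\varepsilon$.

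From this form bound I pass to the resolvent estimate~\eqref{eq:res_conv} by Lax--Milgram. Fix first a real $k$ large enough that $t_0+k\langle\cdot,\cdot\rangle$ is coercive on $\dom{H^{1/2}}$; the previous bound shows the same holds for $t+k\langle\cdot,\cdot\rangle$ uniformly in small $\varepsilon$. Given $\phi\in\mathscr{H}$, set $\psi_0:=(H_{0,\mathrm{ren}}+k)^{-1}\phi$ and let $\psi_\varepsilon$ solve $(H_{\mathrm{ren}}+k)\psi_\varepsilon=\phi$ weakly. Subtracting the two weak formulations and testing against $\psi_\varepsilon-\psi_0$ gives $\|\psi_\varepsilon-\psi_0\|_{\dom{H^{1/2}}}\le C\varepsilon\|\psi_0\|_{\dom{H^{1/2}}}\le C'\varepsilon\|\phi\|$, which is the desired $O(\varepsilon)$ bound. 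Extension to any $k\in\mathrm{Res}\,H_{0,\mathrm{ren}}$ is routine: the second resolvent identity together with a Neumann-series argument propagates both the invertibility of $H_{\mathrm{ren}}+k$ and the $O(\varepsilon)$ estimate along the connected component of the common resolvent set.

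For the second bound, I exploit the tensor product $\mathscr{H}=L^2(\Sigma)\otimes L^2(I)$ together with the spectral decomposition of $-\partial_u^2$. On $L^2(\Sigma)\otimes\mathrm{span}\{\chi_1\}$ the renormalized transverse operator $\varepsilon^{-2}(-\partial_u^2-E_1)$ vanishes and $H_{0,\mathrm{ren}}$ acts as $h_{\mathrm{eff}}$; on the orthogonal complement it is bounded below by $\varepsilon^{-2}(E_2-E_1)$, so that block of $(H_{0,\mathrm{ren}}+k)^{-1}$ has norm $O(\varepsilon^2)$. Consequently $\|(H_{0,\mathrm{ren}}+k)^{-1}-(h_{\mathrm{eff}}+k)^{-1}\oplus 0\|=O(\varepsilon^2)$, and a triangle-inequality combination with the first estimate yields the second. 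The principal obstacle throughout is making the form comparison genuinely uniform in $\varepsilon$ despite the singular scaling; this is exactly what assumption~\ref{eq:res_conv_ass} is designed to handle, by keeping the effective potential $V$ and its $\varepsilon$-corrections uniformly controlled in $L^\infty$.
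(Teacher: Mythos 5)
The paper does not prove Theorem~\ref{Thm.KRT}; it explicitly states ``we reproduce here Theorems 6.3 and 7.1 of~\cite{KRT} in a slightly modified form,'' so there is no proof in this paper against which to compare your argument. That said, your reconstruction follows the natural two--step route that one expects the cited proof to take, and the structure is sound: (i)~the singular $\varepsilon^{-2}$ transverse term cancels in $t-t_0$, leaving only the metric difference $a^{\mu\nu}=G^{\mu\nu}-g^{\mu\nu}=\mathcal{O}(\varepsilon)$ and the potential difference $V-V_{\mathrm{eff}}=\mathcal{O}(\varepsilon)$, both dominated via~\eqref{eq:metric_est} by the form of $H_{0,\mathrm{ren}}$; (ii)~on $\mathscr{H}_1^\perp$ the operator $\varepsilon^{-2}(-\partial_u^2-E_1)$ has a spectral gap of size $\varepsilon^{-2}(E_2-E_1)$, so that block of the resolvent is $\mathcal{O}(\varepsilon^2)$ and the $\oplus 0$ approximation is even better than~$\mathcal{O}(\varepsilon)$.

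Two points deserve to be made explicit rather than asserted. First, the heart of the matter is your claim that $V-V_{\mathrm{eff}}=\mathcal{O}(\varepsilon)$ \emph{uniformly} on $\Sigma\times I$; this is exactly where~\ref{eq:res_conv_ass} enters, because the tangential part of $V$ contains terms of the schematic form $\Delta_g J$ and $|\nabla_{\!g}J|_g^2$, which after expanding $J=\tfrac12\ln(1+P)$ with $P=\sum(-\varepsilon u)^\mu\binom{d-1}{\mu}K_\mu$ reduce to $\varepsilon$ times expressions controlled by $\Delta_g\kappa_\mu$ and $|\nabla_{\!g}\kappa_\mu|_g$. You state this but do not verify it; in a full proof this is the main computation, and it is worth noting that the transverse part $\varepsilon^{-2}(J_{,uu}+J_{,u}^2)$ reproduces $V_{\mathrm{eff}}$ at leading order. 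Second, for the Lax--Milgram step you write a bound on the quadratic form $|t[\psi]-t_0[\psi]|$, but what you actually test against is the \emph{sesquilinear} form $(t-t_0)(\psi_0,\psi_\varepsilon-\psi_0)$; this requires the corresponding off-diagonal estimate, which follows by Cauchy--Schwarz on each term of the difference (not by polarization of the KLMN bound, since $t-t_0$ is not sign-definite). Both points are routine, and the remainder (coercivity for one large $k$, Neumann-series propagation to all of $\mathrm{Res}\,H_{0,\mathrm{ren}}$, which is connected for a semibounded self-adjoint operator) is standard. Overall this is a faithful sketch of how one would prove the cited result.
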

\noindent Here  $(h_{\mathrm{eff}}+k)^{-1}\oplus 0$ acts on
$\mathscr{H}_{1}\oplus\mathscr{H}_{1}^{\bot}$,
where $\mathscr{H}_{1}$ is spanned by the functions of the form 
$f\otimes\chi_1$ 
with $f\in L^2(\Sigma,|g|^{1/2}\dd x^1\ldots\dd x^{d-1})$.

\section{Convergence of eigenfunctions}\label{Sec.conv}
\subsection{$L^2$-convergence}
Denote the eigenvalues of $H_{0,\mathrm{ren}}$ 
in a non-decreasing order (counting
multiplicity) by $\{\lambda_{n}^{0}\}_{n=1}^\infty$. 
Let $\mu=\lambda_{k}^{0}=\ldots =\lambda_{k+p-1}^{0}$ be a $p$-degenerated
eigenvalue of $H_{0,\mathrm{ren}}$, 
$k,p \geq 1$. 
Due to \eqref{eq:res_conv}, one can prove (see again \cite{KRT} for details)
that for all sufficiently small $\varepsilon$, the number of eigenvalues of
$H_{\mathrm{ren}}$ in the contour $\Gamma$ of radius smaller than the isolation
distance of $\mu$  and centred at $\mu$ equals $p$. Moreover, there exists a
family of linearly independent eigenfunctions $\psi_{n}$, 
with $n=k,\dots,k+p-1$, of~$H_{\mathrm{ren}}$ associated with
these eigenvalues of~$H_{\mathrm{ren}}$,
which we denote by 
$\lambda_k, \dots, \lambda_{k+p-1}$
(they are not necessarily sorted in a non-decreasing order,
but they are counted according to multiplicities), and a base
$(\psi_k^0,\ldots,\psi_{k+p-1}^0)$
of the eigenspace associated with the eigenvalue $\mu$ of~$H_{0,\mathrm{ren}}$
such that the following result holds (see also \cite[Corol.~8.1]{KRT}).

\begin{theorem}
With the notation introduced above
and under the hypotheses of Theorem~\ref{Thm.KRT}, 
\begin{equation*}
 |\lambda_n-\lambda_{n}^{0}|=\mathcal{O}(\varepsilon),\quad
\|\psi_n-\psi_{n}^{0}\|=\mathcal{O}(\varepsilon),
\end{equation*}
as $\varepsilon\to 0$.
\end{theorem}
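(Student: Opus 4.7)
The plan is to apply the standard Riesz-projection apparatus of regular perturbation theory, fed by the norm-resolvent estimate \eqref{eq:res_conv} from Theorem~\ref{Thm.KRT}. First, the contour $\Gamma$ sits at positive distance from $\sigma(H_{0,\mathrm{ren}})$; combining the first resolvent identity with \eqref{eq:res_conv} applied at a fixed $k\in\mathrm{Res}\,H_{0,\mathrm{ren}}$, one obtains, for all sufficiently small~$\varepsilon$, a uniform bound
\begin{equation*}
 \|(H_{\mathrm{ren}}-z)^{-1}-(H_{0,\mathrm{ren}}-z)^{-1}\|\leq C_{1}\varepsilon
 \qquad (z\in\Gamma),
\end{equation*}
which in particular guarantees $\Gamma\subset\mathrm{Res}\,H_{\mathrm{ren}}$. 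The Riesz spectral projections
\begin{equation*}
 P:=-\frac{1}{2\pi i}\oint_{\Gamma}(H_{\mathrm{ren}}-z)^{-1}\,\dd z,
 \qquad
 P_{0}:=-\frac{1}{2\pi i}\oint_{\Gamma}(H_{0,\mathrm{ren}}-z)^{-1}\,\dd z
\end{equation*}
then satisfy $\|P-P_{0}\|\leq C_{2}\varepsilon$, and hence $\dim\ran{P}=\dim\ran{P_{0}}=p$ for small~$\varepsilon$.

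For the eigenvalue bound, I would use the functional-calculus identity $(H_{\mathrm{ren}}-\mu)P=-\tfrac{1}{2\pi i}\oint_{\Gamma}(z-\mu)(H_{\mathrm{ren}}-z)^{-1}\,\dd z$ together with its analogue for $H_{0,\mathrm{ren}}$, which vanishes since $\mu$ is the only eigenvalue of $H_{0,\mathrm{ren}}$ inside $\Gamma$. Subtracting and applying the uniform resolvent estimate of the previous paragraph gives $\|(H_{\mathrm{ren}}-\mu)P\|\leq C_{3}\varepsilon$. Because every eigenvalue $\lambda_{n}$ with $n\in\{k,\ldots,k+p-1\}$ possesses a unit eigenvector belonging to $\ran{P}$, this yields $|\lambda_{n}-\mu|\leq C_{3}\varepsilon$, which, together with $\lambda_{n}^{0}=\mu$, is the asserted eigenvalue estimate.

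For the eigenfunction estimate, choose $\psi_{k},\ldots,\psi_{k+p-1}$ to be an orthonormal basis of $\ran{P}$ consisting of eigenvectors of the finite-dimensional self-adjoint compression $PH_{\mathrm{ren}}P$, with the respective eigenvalues $\lambda_{k},\ldots,\lambda_{k+p-1}$. For $\varepsilon$ small enough that $\|P-P_{0}\|<1$, Kato's unitary intertwiner
\begin{equation*}
 U:=\bigl[I-(P-P_{0})^{2}\bigr]^{-1/2}\bigl[PP_{0}+(I-P)(I-P_{0})\bigr]
\end{equation*}
is well defined and unitary on $\mathscr{H}$, sends $\ran{P_{0}}$ onto $\ran{P}$, and obeys $\|U-I\|=\mathcal{O}(\varepsilon)$. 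Setting $\psi_{n}^{0}:=U^{-1}\psi_{n}$ gives an orthonormal basis of $\ran{P_{0}}$ with $\|\psi_{n}-\psi_{n}^{0}\|\leq\|U-I\|=\mathcal{O}(\varepsilon)$.

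The only point demanding care is the $\varepsilon$-uniformity of the constants. This is ensured by fixing the contour $\Gamma$ in advance (depending only on~$\mu$ and its isolation distance from $\sigma(H_{0,\mathrm{ren}})$), after which \eqref{eq:res_conv} and its contour-integral consequences are uniform on $\Gamma$; beyond this bookkeeping, everything takes place in a fixed $p$-dimensional spectral subspace, so no additional serious obstacle is anticipated.
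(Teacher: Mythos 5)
Your proof is correct and reconstructs the standard Riesz-projection argument that the paper merely delegates to \cite[Corol.~8.1]{KRT}: a uniform resolvent estimate on the fixed contour, $\|P-P_0\|=\mathcal{O}(\varepsilon)$, the identity $(H_{\mathrm{ren}}-\mu)P=-\tfrac{1}{2\pi i}\oint_\Gamma (z-\mu)(H_{\mathrm{ren}}-z)^{-1}\,\dd z$, and Kato's unitary intertwiner for the eigenvector estimate. The only point worth tightening is logical order in the first step: the inclusion $\Gamma\subset\mathrm{Res}\,H_{\mathrm{ren}}$ must be established first (via a Neumann-series argument starting from~\eqref{eq:res_conv}) before the resolvent difference can be estimated along $\Gamma$, rather than being read off as a consequence of the already-asserted uniform bound.
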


\begin{remark}[Eigenpairs of $H_{0,\mathrm{ren}}$] \label{rem:H_0_spec} It is
obvious that the eigenpairs $(\lambda_{n}^{0},\psi_{n}^{0})$ are of the form
$(\sigma_i+\varepsilon^{-2}(E_j-E_1),\phi_i\otimes\chi_j)$, where
$(\sigma_i,\phi_i)$ is an eigenpair of $h_{\mathrm{eff}}$. Thus for any $N\in\N$
there is $\varepsilon(N)$ such that for all $\varepsilon<\varepsilon(N)$ and
$n=1,\ldots,N$, we have $\lambda_{n}^{0}=\sigma_n$ and
$\psi_{n}^{0}=\phi_{n}\otimes\chi_1$.
\end{remark}

\subsection{$\mathcal{H}^{1}$-convergence}
Subtracting eigenvalue equations for the operators
$H_{\mathrm{ren}}$ and $H_{0,\mathrm{ren}}$, 
we arrive at the following equation for $\psi:=\psi_{n}-\psi_{n}^{0}$
\begin{equation}\label{eq:psi_diff}
\begin{split}
 -|g|^{-1/2}\partial_{\mu}|g|^{1/2}G^{\mu\nu}\partial_{\nu}\psi&+\varepsilon^{-2
}(-\partial_{u}^{2}-E_{1})\psi=|g|^{-1/2}\partial_{\mu}|g|^{1/2}a^{\mu\nu}
\partial_{\nu}\psi_{n}^{0}\\
 &+(\lambda_{n}-V)\psi+(V_{\mathrm{eff}}-V+\lambda_{n}-\lambda_{n}^{0})\psi_{n}^
{0},
\end{split}
\end{equation}
where $a^{\mu\nu}:=G^{\mu\nu}-g^{\mu\nu}$. Next we multiply (using the scalar
product of $\mathscr{H}$) the both sides of this result by $\psi$ to obtain
\begin{equation*}
\begin{split}
\langle\partial_{\mu}\psi,G^{\mu\nu}\partial_{\nu}\psi\rangle+&\varepsilon^{-2}
(\|\partial_{u}\psi\|^2-E_{1}\|\psi\|^2)=-\langle\partial_{\mu}\psi_{n}^{0},a^{
\mu\nu}\partial_{\nu}\psi\rangle\\&+\langle(\lambda_{n}-V)\psi,
\psi\rangle+\langle(V_{\mathrm{eff}}-V+\lambda_{n}-\lambda_{n}^{0})\psi_{n}^{0},
\psi\rangle.
\end{split}
\end{equation*}
By the Cauchy-Schwarz and Young inequalities we have
\begin{equation}\label{eq:product_est}
\begin{split}
\langle\partial_{\mu}\psi,&G^{\mu\nu}\partial_{\nu}\psi\rangle+\varepsilon^{-2}
(\|\partial_{u}\psi\|^2-E_{1}\|\psi\|^2)
\leq \frac{\delta}{2}\|\nabla_{\! a}\psi_{n}^{0}\|^2\\
&+\frac{1}{2\delta}\|\nabla_{\! a}\psi\|^2+\|\lambda_{n}-V\|_{\infty}\|\psi\|^2
+\|V_{\mathrm{eff}}-V+\lambda_{n}-\lambda_{n}^{0}\|_{\infty}\,\|\psi_{n}^{0}\|\,
\|\psi\|.
\end{split}
\end{equation}
with any $\delta>0$.

Estimates~\eqref{eq:metric_est} imply
$$-(1-C_{+}^{-1})g^{\mu\nu}\leq  a^{\mu\nu}\leq(C_{-}^{-1}-1)g^{\mu\nu}.$$
For $\delta\leq 1/2$, we have
$$G^{\mu\nu}-\frac{1}{2\delta}a^{\mu\nu}\geq
g^{\mu\nu}+\left(1-\frac{1}{2\delta}\right)(C_{-}^{-1}-1)g^{\mu\nu}.$$
If $\varepsilon\varrho_{m}^{-1}\leq 1-\sqrt{2/3}$, then $(C_{-}^{-1}-1)\leq
1/2$.  Consequently we may set $\delta=(C_{-}^{-1}-1)$ to conclude that
$$G^{\mu\nu}-\frac{1}{2\delta}a^{\mu\nu}\geq
\frac{1}{2}g^{\mu\nu}+(C_{-}^{-1}-1)g^{\mu\nu}\geq\frac{1}{2}g^{\mu\nu}.$$
Now, from \eqref{eq:product_est} it follows that
\begin{equation*}
\begin{split}
 \frac{1}{2}\|\nabla_{\!
g}\psi\|^2+\varepsilon^{-2}(\|\partial_{u}\psi\|^2-E_{1}\|\psi\|^2)&\leq\frac{C_
{-}^{-1}-1}{2}\|\nabla_{\! a}\psi_{n}^{0}\|^2+\mathcal{O}(\varepsilon^2)\\
 &\leq\frac{(C_{-}^{-1}-1)^2}{2}\|\nabla_{\!
g}\psi_{n}^{0}\|^2+\mathcal{O}(\varepsilon^2)
 \\
 &=\mathcal{O}(\varepsilon^2).
\end{split}
\end{equation*}
Here we used that 
$\|\psi_{n}^{0}\|<\infty,\, \|\nabla_{\! g}\psi_{n}^{0}\|<\infty$, 
and
\begin{equation}\label{eq:pot_bound}
\begin{aligned}
\|\lambda_{n}-V\|_{\infty}&=\mathcal{O}(1),
&&& (C_{-}^{-1}-1)&=\mathcal{O}(\varepsilon),
\\
\|V_{\mathrm{eff}}-V+\lambda_{n}-\lambda_{n}^{0}\|_{\infty}
&=\mathcal{O}(\varepsilon),
&&& \|\psi\| &=\mathcal{O}(\varepsilon),
\end{aligned}
\end{equation}
as $\varepsilon\to 0$. Since for all $\psi\in\mathcal{H}_{0}^{1}(\Sigma\times
I)$, $\|\partial_{u}\psi\|^2\geq E_{1}\|\psi\|^2$ 
by Fubini's theorem and the Poincar\'{e} inequality
\begin{equation}\label{Poincare}
  \forall \varphi \in \mathcal{H}_{0}^{1}(I) \,, \qquad
  \int_I |\partial_{u}\varphi|^2\, \dd u 
  \geq E_1 \int_I |\varphi|^2 \, \dd u \,,
\end{equation}
we finally obtain 
\begin{equation}\label{eq:H_1_convergence}
 \|\nabla_{\! g}\psi\|=\mathcal{O}(\varepsilon),\quad
\|\partial_{u}\psi\|=\mathcal{O}(\varepsilon).
\end{equation}

\begin{remark}
Let  $\mathcal{K}$ be a compact set in the range of a chosen local coordinate
map on $\Sigma$. Recall that the entries of $(g_{\mu\nu})$ are continuous. 
Then due to the 
local boundedness and positive definiteness
of $g$ we have
$$\inf_{\mathcal{K}}\|g\|^{-1}>0,\quad \inf_{\mathcal{K}}|g|>0,$$
where $\|g\|:=\|g\|_{\C^{d-1}\to\C^{d-1}}$.
Moreover, the matrix estimate $g_{\mu\nu}\leq\|g\|\,\delta_{\mu\nu}$ implies
$$\|g\|^{-1}\,\delta^{\mu\nu}\leq g^{\mu\nu}.$$
Since
$$
\begin{aligned}
  \langle\partial_{\mu}\psi,g^{\mu\nu}\partial_{\nu}\psi\rangle
  &\geq\langle\|g\|^ {-1}\,\delta^{\mu\nu}\partial_{\mu}\psi,
  \partial_{\nu}\psi\rangle
  \\
  &\geq \inf_{\mathcal{K}}\|g\|^{-1}\,
  \inf_{\mathcal{K}}|g|^{1/2}\int_{\mathcal{K}\times I}
  \sum_{i=1}^{d-1}|\partial_{i}\psi|^2\dd x,
\end{aligned}
$$
we conclude that
$$\int_{\mathcal{K}\times
I}\sum_{i=1}^{d-1}|\partial_{i}\psi|^2=\mathcal{O}(\varepsilon^2),$$
due to \eqref{eq:H_1_convergence}.
\end{remark}

\subsection{Interior $\mathcal{H}^2$-convergence}\label{sec:H2_conv}
In this section we assume that $\psi_{n}$ and $\psi_{n}^{0}$ are sufficiently
regular so that all their derivatives up to the $3$rd order exist in the
classical sense and are interchangeable. This holds true if
\begin{equation}\tag*{$\langle A3\rangle$}\label{eq:as3}
 \Sigma\in C^l,\ \partial\Sigma\in C^l,\quad\text{with }l>d/2+6,
\end{equation}
as one may see from Remark \ref{rem:metric_cont}, the elliptic regularity, and
the Sobolev embedding theorem. Our proof of the $\mathcal{H}^2$-convergence of
the eigenfunctions is motivated by the proof of the interior regularity for
elliptic operators (see \cite[Theo.~1 in $\S$6.3.1]{evans}). The major
difference is that we have to treat $\varepsilon^{-2}$-terms in  a special
manner.
In fact, due to the product character of the domain, we will prove the
regularity up to the transverse part of  the boundary, $\Sigma\times\partial I$.

Let $W$ be an open bounded subset of $\R^{d-1}$ such that the closure of $W$
(denoted by $\overline{W}$) 
lies in the range of a local coordinate map on $\Sigma$. Furthermore, let $U$
stand for an open precompact subset containing $\overline{W}$ and contained in
the range of the same coordinate map. Then there exists a smooth function
$\zeta$ (known as the bump function for $\overline{W}$ supported in $U$) with
the following properties: $0\leq\zeta\leq1$, 
$\zeta = 1$ on $\overline{W}$, 
and $\mathrm{supp}~\zeta\subset U$. For $k=1,2,\ldots,d$, set 
\begin{equation}\label{eq:test_funct}
v:=-\partial_{k}\zeta^2\partial_{k}\psi\quad(\text{no summation over }k).
\end{equation}
Using integration by parts carefully, we obtain
\begin{align}
  \langle -\partial_{\mu}G^{\mu\nu}\partial_{\nu}\psi,v\rangle_{U\times I} 
&=\langle\partial_{k}G^{\mu\nu}\partial_{\nu}\psi,\partial_{\mu}\zeta^2\partial_
{k}\psi\rangle_{U\times I} \label{eq:per_partes}
  \\
 \langle \psi,v\rangle_{U\times I}
  &=\langle\partial_{k}\psi,\zeta^2\partial_{k}\psi\rangle_{U\times I}
\label{eq:per_partes2}
  \\
  \langle -\partial_{u}^{2}\psi,v\rangle_{U\times I}
&=\langle\partial_{u}\partial_{k}\psi,\zeta^2\partial_{u}\partial_{k}
\psi\rangle_{U\times I},\label{eq:per_partes3}
\end{align}
where $\langle\cdot,\cdot\rangle_{U\times I}$ is the scalar product of
$L^{2}(U\times I,\dd x_1\ldots\dd x_d)$. The boundary terms vanish because
$\psi=\partial_{\mu}\psi=\partial_{\mu}\partial_{\nu}\psi=0$ on $U\times\partial
I$ and $\zeta=\partial_{j}\zeta^2\partial_{k}\psi=0$ on $\partial U\times I$.

If we  multiply (on $L^{2}(U\times I,\dd x_1\ldots\dd x_d)$) the both sides of
($\ref{eq:psi_diff}$) by $v$ we have
$$A_{1}+\varepsilon^{-2}A_{2}=B,$$
where
\begin{align*}
 A_{1}&:=\langle\partial_{k}G^{\mu\nu}\partial_{\nu}\psi,
 \partial_{\mu}\zeta^2\partial_{k}\psi\rangle_{U\times I},\nonumber\\
 A_{2}&:=\langle\partial_{u}\partial_{k}\psi,\zeta^2\partial_{u}\partial_{k}
\psi\rangle_{U\times I}-E_{1}\langle\partial_{k}\psi,
\zeta^2\partial_{k}\psi\rangle_{U\times I},\label{eq:poincare}\\
 B&:=\langle-(|g|^{-1/2})_{,\mu}G^{\mu\nu}|g|^{1/2}\partial_{\nu}\psi+f_{
\varepsilon}+(\lambda_{n}-V)\psi,v\rangle_{U\times I},\nonumber
\end{align*}
with
\begin{equation*}
 f_{\varepsilon}:=|g|^{-1/2}\partial_{\mu}|g|^{1/2}a^{\mu\nu}\partial_{\nu}\psi_
{n}^{0}+(V_{\mathrm{eff}}-V+\lambda_{n}-\lambda_{n}^{0})\psi_{n}^{0}.
\end{equation*}
Note that, as $\partial_{u}\zeta=0$ everywhere and $\partial_{\mu}\psi=0$ on
$U\times\partial I$, $A_{2}\geq 0$ if $k=1,\ldots,d-1$, due to Fubini's theorem
and the Poincar\'{e} inequality~\eqref{Poincare}.

\begin{remark}
In virtue of Remark~\ref{rem:metric_cont},
$$\sup_{U\times I}|a^{\mu\nu}|=\mathcal{O}(\varepsilon), \qquad \sup_{U\times
I}|\partial_\mu a^{\mu\nu}|=\mathcal{O}(\varepsilon).$$
This together with \eqref{eq:pot_bound} implies that
$$\int_{U\times I}|f_{\varepsilon}|^2=\mathcal{O}(\varepsilon^2).$$
\end{remark}

One immediately has
\begin{equation*}
 A_{1}=A_{1}'+A_{1}'',
\end{equation*}
where
\begin{equation*}
\begin{split}
 &A_{1}':=\langle
G^{\mu\nu}\partial_{\nu}\partial_{k}\psi,\zeta^2\partial_{\mu}\partial_{k}
\psi\rangle_{U\times I}\\
 &\begin{split}A_{1}'':=&\langle
G^{\mu\nu}_{,k}\partial_{\nu}\psi,2\zeta\zeta_{,\mu}\partial_{k}\psi\rangle_{
U\times I}+\langle
G^{\mu\nu}_{,k}\partial_{\nu}\psi,\zeta^2\partial_{\mu}\partial_{k}\psi\rangle_{
U\times I}\\
  &+\langle G^{\mu\nu}\partial_{k}\partial_{\nu}\psi,2\zeta\zeta_{,\mu}
\partial_{k}\psi\rangle_{U\times I}.
 \end{split}
\end{split}
\end{equation*}
Each part of this decomposition may be estimated separately. We have
\begin{equation}\label{eq:A_1'_est}
 A_{1}'\geq\gamma\langle\zeta^2\delta^{\mu\nu}\partial_{\nu}\partial_{k}\psi,
\partial_{\mu}\partial_{k}\psi\rangle_{U\times I}=\gamma\int_{U\times I}\zeta^2
|\tilde{D}\partial_{k}\psi|^2,
\end{equation}
where $\tilde{D}\psi:=(\partial_1 \psi,\ldots,\partial_{d-1}\psi)$ and the
positive constant $\gamma$ may be chosen as a uniform bound for all
$\varepsilon\in(0,\varepsilon_{U})$ with some fixed
$\varepsilon_{U}<\varrho_{m}$. In view of Remark~\ref{rem:metric_cont}, it is
also clear that the quantities
$$\sup_{U\times I}|G^{\mu\nu}| \qquad \text{and} \qquad \sup_{U\times
I}|G^{\mu\nu}_{,k}|$$
are uniformly bounded on $(0,\varepsilon_{U})$. Therefore, for
$\varepsilon<\varepsilon_{U}$, we can estimate 
\begin{equation}\label{eq:A_1''_est}
 \begin{split}
 |A_{1}''|&\leq C\int_{U\times
I}\left(\zeta|\tilde{D}\psi||\partial_{k}\psi|+\zeta|\tilde{D}\psi||\tilde{D}
\partial_{k}\psi|+\zeta|\partial_{k}\tilde{D}\psi||\partial_{k}\psi|\right)\\
 &\leq 2C\int_{U\times
I}\Big(\delta\zeta^2|\tilde{D}\partial_{k}\psi|^2+\frac{1}{4\delta}\big(|\tilde{
D}\psi|^2+|\partial_{k}\psi|^2\big)\Big),
 \end{split}
\end{equation}
where $0<\delta\leq1/2$ and $C=C(W,U,\zeta,\Sigma)$.

Next, for all $\varepsilon\in(0,\varepsilon_{U}')$ with
$\varepsilon_{U}'<\varepsilon_{U}$ sufficiently small, we obtain
\begin{equation}\label{eq:B_est}
 \begin{split}
 |B|&\leq C'\int_{U\times I}(|\tilde{D}\psi|+|f_{\varepsilon}|+|\psi|)|v|\\
 &\leq C'\int_{U\times
I}\Big(\alpha|v|^2+\frac{3}{4\alpha}\big(|\tilde{D}\psi|^2+|f_{\varepsilon}
|^2+|\psi|^2\big)\Big),
 \end{split}
\end{equation}
where $\alpha>0$, and $C'=C'(W,U,\zeta,\Sigma)$. We  also have
\begin{equation}\label{eq:v_est}
 \int_{U\times I}|v|^2\leq C''\left(\int_{U\times
I}|\partial_{k}\psi|^{2}+\int_{U\times I}\zeta^2|\partial_{k}^{2}\psi|^2\right)
\end{equation}
with $C''=C''(W,U,\zeta)$.

Now we will distinguish two cases:

$\bullet \boldsymbol{k=1,2,\ldots,d-1:}$
Using \eqref{eq:A_1'_est}, \eqref{eq:A_1''_est}, \eqref{eq:B_est}, and
\eqref{eq:v_est}, the  estimate
\begin{equation}\label{eq:reg_est}
A_{1}'-|A_{1}''|+\varepsilon^{-2}A_{2}\leq |B|
\end{equation}
yields
\begin{multline}\label{eq:H^2_main_est}
  (\gamma-2C\delta)\int_{U\times
I}\zeta^2|\tilde{D}\partial_{k}\psi|^2-C'C''\alpha\int_{U\times
I}\zeta^2|\partial_{k}^{2}\psi|^2\\
 +\varepsilon^{-2}\left(\int_{U\times
I}\zeta^2|\partial_{u}\partial_{k}\psi|^2-E_{1}\int_{U\times
I}\zeta^2|\partial_{k}\psi|^2\right)\\
 \leq \Big(\frac{C}{2\delta}+C'C''\alpha\Big)\int_{U\times
I}|\partial_{k}\psi|^2+\Big(\frac{C}{2\delta}+\frac{3C'}{4\alpha}\Big)\int_{
U\times I}|\tilde{D}\psi|^2\\
 +\frac{3C'}{4\alpha}\int_{U\times I}(|f_{\varepsilon}|^2+|\psi|^2).
\end{multline}
All the terms on the right hand side are of the order
$\mathcal{O}(\varepsilon^2)$. Therefore, with~$\delta$ and~$\alpha$ small enough
we have
\begin{align}
  &\int_{U\times I}\zeta^2|\tilde{D}\partial_{k}\psi|^2
  =\mathcal{O}(\varepsilon^2),\label{eq:mixed_der_conv_D}\\
  &\int_{U\times I}\zeta^2|\partial_{u}\partial_{k}\psi|^2
  =\mathcal{O}(\varepsilon^{2}).\label{eq:mixed_der_conv}
\end{align}

$\bullet \boldsymbol{k=d:}$
From \eqref{eq:mixed_der_conv} and the inequality
$$A_{1}'+\varepsilon^{-2}\int_{U\times I}\zeta^2 |\partial_{u}^{2}\psi|^{2}\leq
|B|+|A_{1}''|+\varepsilon^{-2}E_{1}\int_{U\times I}\zeta^2|\partial_{u}\psi|^2$$
we show in a similar manner as above that
\begin{equation}\label{eq:uu_der_conv}
\int_{U\times I}\zeta^2|\partial_{u}^{2}\psi|^2=\mathcal{O}(\varepsilon^2).
\end{equation}

\subsection{$\mathcal{H}^2$-convergence up to the boundary} 
In this subsection we will additionally assume that $\Sigma$ admits a uniformly
positive definite metric, and so estimate \eqref{eq:A_1'_est} remains valid up to the boundary.

Under \ref{eq:as3}, $\partial\Sigma$ is definitely $C^2$-smooth, which means
that for every boundary point $x_{\shortparallel,0}$ from the range of a local
boundary coordinate map (denote this  range by $R$) there exists a ball,
$B_{x_{\shortparallel,0}}(r)$, and $\alpha\in\{1,2,\ldots, d-1\}$ such that
$$R\cap B_{x_{\shortparallel,0}}(r)=\{(x_1,x_2,\ldots,x_{d-1})\in
B_{x_{\shortparallel,0}}(r)\,|\,
x_{\alpha}\geq\gamma(x_1,\ldots,\hat{x}_{\alpha},\ldots,x_{d-1})\},$$
where $\gamma\in C^2(\R^{d-2})$. This makes it possible to pass to  new local
coordinates with a straightened boundary; 
in these coordinates (denoted by the
same symbols) defined on some open half-ball of radius $\delta$, $U(\delta)$, 
we have $x_\alpha=0$ and $\psi=0$ on the straight part of $\partial
U(\delta)\times I$. Consequently,
$\partial_{j}\psi=\partial_{j}\partial_{l}\psi\equiv 0$ on
$\{x_{\shortparallel}\in\partial U(\delta)\,|\, x_\alpha=0\}\times I$, for
$j,l\neq\alpha$.

Now we may continue in the similar manner as when proving the convergence on the
``inner'' domains. More concretely, we set $U=U(\delta)$ and introduce a new
bump function that equals $1$ on a half-ball that has the common centre as
$U(\delta)$ but a smaller radius and it is completely contained in $U(\delta)$.
Beware that~\eqref{eq:per_partes} does not hold for $k=\alpha$ anymore.
Nevertheless it still holds true for $k\neq\alpha$, 
and so for  $k\in\{1,\ldots,\hat{\alpha},\ldots,d-1\}$,
\eqref{eq:mixed_der_conv_D} and \eqref{eq:mixed_der_conv} remain valid too.  If
we prove \eqref{eq:mixed_der_conv} for $k=\alpha$, we will also have
\eqref{eq:uu_der_conv}.
Moreover if we prove that
$$\int_{U(\delta)\times
I}\zeta^2|\partial^2_{\alpha}\psi|^2=\mathcal{O}(\varepsilon^2),$$
we shall obtain the local $\mathcal{H}^{2}$-convergence up to the boundary. 

But this may be done as follows. In the decomposition
\begin{equation*}
 \langle-\partial_{\mu}G^{\mu\nu}\partial_{\nu}\psi,
-\partial_\alpha\zeta^2\partial_\alpha\psi\rangle_{U(\delta)\times
I}=\tilde{A}_{1}'+\tilde{A}_{1}'',
\end{equation*}
with
\begin{align*}
 \tilde{A}_{1}':=&\langle
G^{\alpha\alpha}\partial^{2}_{\alpha}\psi,\zeta^2\partial^{2}_{\alpha}
\psi\rangle_{U(\delta)\times I},\\
 \begin{split}\tilde{A}_{1}'':=&\langle
G^{\alpha\alpha}_{,\alpha}\partial_{\alpha}\psi,\zeta^2\partial^{2}_{\alpha}
\psi\rangle_{U(\delta)\times I}+\langle
G^{\alpha\alpha}\partial^{2}_{\alpha}\psi,2\zeta\zeta_{,\alpha}\partial_{\alpha}
\psi\rangle_{U(\delta)\times I}\\
 &+\langle
G^{\alpha\alpha}_{,\alpha}\partial_{\alpha}\psi,2\zeta\zeta_{,\alpha}\partial_{
\alpha}\psi\rangle_{U(\delta)\times
I}+\sum_{\mu\neq\alpha\vee\nu\neq\alpha}\langle\partial_{\mu}
G^{\mu\nu}\partial_{\nu}\psi,\zeta^2
\partial^{2}_{\alpha}\psi\rangle_{U(\delta)\times I}\\
 &+\sum_{\mu\neq\alpha\vee\nu\neq\alpha}\langle\partial_{\mu}
G^{\mu\nu}\partial_{\nu}\psi,2\zeta\zeta_{,\alpha}\partial_{\alpha}\psi\rangle_{
U(\delta)\times I},
 \end{split}
\end{align*}
we may estimate $\tilde{A}'_{1}$ from below and $|\tilde{A}''_{1}|$ from above
in the same way as we have estimated $A'_{1}$ and $|A''_{1}|$, respectively.
Finally, the inequality
$$\tilde{A}_{1}'-|\tilde{A}_{1}''|+\varepsilon^{-2}A_{2}\leq |B|$$
leads to an equation that is similar to \eqref{eq:H^2_main_est} which in turn
yields the desired result.

\subsection{Interior $\mathcal{H}^k$-convergence and $C^0$-convergence}
In this section we show that under some stronger regularity assumptions on
$\Sigma$, for any $k\geq 3$,
\begin{equation*}
 \|\psi\|_{\mathcal{H}^k(W\times I)}=\mathcal{O}(\varepsilon),
\end{equation*}
whenever we have this result for $k-1$. Within this section $W$ and $U$ are the
same as in Section \ref{sec:H2_conv}. Let us assume that
\begin{equation}\tag*{$\langle A4\rangle$}\label{eq:H^k_assumptions}
\Sigma\in C^l\quad\text{with }l\in\N,\, l>2(k+1)+\frac{d}{2}, 
\end{equation}
which implies that all derivatives of $|g|^{\pm 1/2}$ up to the order $k-1$ are
bounded on any compact set, all derivatives of $G^{\mu\nu}$ up to the same order
are uniformly bounded in $\varepsilon$ (on some right neighbourhood of zero) on
any compact set, and
$$
 \sup_{U\times I}|D^\alpha V-D^\alpha V_{\mathrm{eff}}|
  =\mathcal{O}(\varepsilon) 
  \,, \qquad
 \sup_{U\times I}|D^\alpha a^{\mu\nu}|
 =\mathcal{O}(\varepsilon) \,,
$$
where $\alpha$ is a $d$-dimensional multiindex with $|\alpha|=k-1$. In fact, the
implications above hold true also under a weaker regularity assumption 
than~\ref{eq:H^k_assumptions}, but we need \ref{eq:H^k_assumptions} for a
sufficient regularity of~$\psi$ anyway
(cf.\ the definition of~$v$ below).

One may suggest proceeding
as in the proof of the $\mathcal{H}^2$-convergence but start with
$$v=(-1)^{|\alpha|}D^{\alpha}\zeta^2 D^{\alpha}\psi$$
instead of \eqref{eq:test_funct}. Here
$D^\alpha=\partial^{\alpha_{1}}_{1}\ldots\partial^{\alpha_{d}}_{d}$. However in
this way we obtain
\begin{equation}\label{eq:k_Sobolev_est}
 \|\partial_k D^\alpha \psi\|_{W\times I}=\mathcal{O}(\varepsilon)
\end{equation}
only if $\alpha_d=0$. This restriction is necessary for diminishing of boundary
terms during the integration by parts (cf.\ \eqref{eq:per_partes},
\eqref{eq:per_partes2}, and \eqref{eq:per_partes3}).
Also remark that in comparison to 
Section~\ref{sec:H2_conv}  
we have to treat the term denoted by~$B$ more carefully. 
At first we integrate by parts $(|\alpha|-1)$-times to get
\begin{align*}
  B=\ &\langle-(|g|^{-1/2})_{,\mu}G^{\mu\nu}|g|^{1/2}
  \partial_{\nu}\psi+f_{\varepsilon}+(\lambda_{n}-V)\psi,v\rangle_{U\times I}
  \\
  =\ &-\big\langle -D^{\tilde{\alpha}}(|g|^{-1/2}),_{\mu}G^{\mu\nu}|g|^{1/2}
  \partial_\nu\psi+D^{\tilde{\alpha}}f_\varepsilon
  \\
  & \qquad +D^{\tilde{\alpha}}
  [(\lambda_n-V)\psi], \,
  2\zeta\zeta,_\varrho D^\alpha \psi+\zeta^2 
  \partial_\varrho D^\alpha \psi\big\rangle_{U\times I}
  \,.
\end{align*}
Here $\varrho\in\{1,\ldots, d-1\}$ is such that $\alpha_\varrho\neq 0$ and  the
multiindex $\tilde{\alpha}$ is given by the decomposition
$\alpha=\tilde{\alpha}+(0,\ldots,0,1,0,\ldots 0)$, where, in the latter
multiindex, the only non-zero entry stands on the $\varrho$th position.
Next we find a bound for $|B|$ in a similar manner as in \eqref{eq:B_est} and
finally, in  \eqref{eq:reg_est}, we put on the left-hand-side only that term
from this bound that contains
$$\int_{U\times I}\zeta^2 |\partial_\rho D^\alpha\psi|^2.$$

Now, with \eqref{eq:k_Sobolev_est} in hand, let us focus on higher derivatives
that contain at least the second derivative with respect to~$u$. 
We differentiate~\eqref{eq:psi_diff} with respect to $x_\gamma$ and then rewrite
the result as
\begin{equation*}
-\partial_\gamma\partial_{u}^{2}\psi=\varepsilon^2\partial_\gamma
|g|^{-1/2}\partial_\mu |g|^{1/2}G^{\mu\nu}\partial_\nu \psi+E_1 \partial_\gamma
\psi+\varepsilon^2 \partial_\gamma f_\varepsilon +\varepsilon^2 \partial_\gamma
(\lambda_n-V) \psi.
\end{equation*}
Next we take $L^2(W\times I)$-norm of both sides to get
\begin{multline*}
 \|\partial_\gamma\partial_{u}^{2}\psi\|_{W\times I}\leq  \varepsilon^2
\|G^{\mu\nu}\|_\infty \|\partial_\gamma\partial_\mu\partial_\nu \psi\|_{W\times
I}+\varepsilon^2 C''' \|\psi\|_{\mathcal{H}^2(W\times I)}\\
 +E_1 \|\partial_\gamma \psi\|_{W\times I}+\varepsilon^2 (\mathrm{vol}(W\times
I))^{1/2} \sup_{W\times I}|\partial_{\gamma}f_\varepsilon|,
\end{multline*}
where $C'''=C'''(W,\Sigma,n)$.
Using \eqref{eq:k_Sobolev_est} and the above mentioned consequences of
\ref{eq:H^k_assumptions} we conclude that
\begin{equation*}
  \|\partial_\gamma\partial_{u}^{2}\psi\|_{W\times I}=\mathcal{O}(\varepsilon)
\end{equation*}
as $\varepsilon\to 0$. 
Differentiating \eqref{eq:psi_diff} with respect to $u$ we obtain, in a similar
manner,
\begin{equation*}
  \|\partial_{u}^{3}\psi\|_{W\times I}=\mathcal{O}(\varepsilon).
\end{equation*}
It is clear that one may use the same strategy 
to estimate also all higher derivatives by a successive differentiation.

\begin{theorem}\label{theo:Hk_conv}
Assume \ref{eq:eps_bound}, \ref{eq:res_conv_ass}, and
\begin{equation}\tag*{$\langle A5\rangle$}\label{eq:reg_ass_final}
\Sigma\in C^l\quad\text{with }l\in\N,\,l>\lfloor 3d/2\rfloor+4
\end{equation}
(there may be even an equality for $d$ odd).
 Then for any open and bounded subset $W$ such that its closure lies in the
range of a local coordinate map on~$\Sigma$, we have
\begin{equation*}
 \|\psi\|_{\mathcal{H}^k(W\times I)}=\mathcal{O}(\varepsilon)
\end{equation*}
with $k=\lfloor d/2\rfloor+1$.
Moreover,
\begin{equation*}
 \|\psi\|_{C^{0,\gamma}(\overline{W\times
I})}\equiv\|\psi\|_{C(\overline{W\times I})}+\sup_{x,y\in W\times I,x\neq
y}\left\{\frac{|\psi(x)-\psi(y)|}{|x-y|^{\gamma}}\right\}=\mathcal{O}
(\varepsilon),
\end{equation*}
where $0<\gamma<1$ for $d$ even and $\gamma=1/2$ for $d$ odd.
\end{theorem}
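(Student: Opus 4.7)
The plan is a two-step argument. First, I would establish the $\mathcal{H}^{k}$-estimate by induction on the Sobolev order $k'$, bootstrapping from the $\mathcal{H}^{2}$-convergence already proved in Section~\ref{sec:H2_conv} up to $k'=k=\lfloor d/2\rfloor+1$. Second, the Hölder bound would follow by a single application of the Sobolev embedding theorem on $W\times I\subset\R^{d}$.

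For the inductive step from order $k'-1$ to $k'$, the idea is to mimic the test-function argument of Section~\ref{sec:H2_conv} with the higher-order test function
$$
  v:=(-1)^{|\alpha|}D^{\alpha}\zeta^{2}D^{\alpha}\psi,
  \qquad |\alpha|=k'-1,\ \alpha_{d}=0,
$$
paired against~\eqref{eq:psi_diff} on $U\times I$. The restriction $\alpha_{d}=0$ is essential: only then are all high-order differentiations tangential, so the repeated integrations by parts in $u$ generate no boundary contribution on $U\times\partial I$ (since the Dirichlet trace of $\psi$ and of all its tangential derivatives vanishes there), and the Poincaré inequality~\eqref{Poincare} allows the unsigned $\varepsilon^{-2}A_{2}$-term to be discarded on the left exactly as in~\eqref{eq:H^2_main_est}. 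The low-order right-hand side~$B$ must additionally be redistributed by $|\alpha|-1$ integrations by parts onto one factor of $D^{\alpha}$ in~$v$, as already sketched above, in order not to lose a derivative. This step yields $\|\partial_{j}D^{\alpha}\psi\|_{L^{2}(W\times I)}=\mathcal{O}(\varepsilon)$ for every $j=1,\dots,d$ and every purely tangential $\alpha$ of length $k'-1$, i.e.\ for every derivative of order~$k'$ containing at most one factor of $\partial_{u}$. Higher-order mixed derivatives carrying two or more factors of $\partial_{u}$ are then obtained algebraically from the PDE: solving~\eqref{eq:psi_diff} for $\partial_{u}^{2}\psi$ places an $\varepsilon^{2}$ in front of every term involving tangential derivatives, so differentiating by an arbitrary $D^{\beta}$ trades two $u$-derivatives on the left for two tangential derivatives plus an $\mathcal{O}(\varepsilon^{2})$ prefactor on the right. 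Iterating this bookkeeping reduces any derivative of order $\leq k$ either to the case handled by the test function or to $D^{\beta}\psi$ with $|\beta|<k$, closing the induction.

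The main technical obstacle is matching the regularity needed by this iteration with hypothesis~\ref{eq:reg_ass_final}: every inductive step consumes one more derivative of $|g|^{\pm 1/2}$, of $G^{\mu\nu}$, of $a^{\mu\nu}$, and of $V-V_{\mathrm{eff}}$, which is precisely what~\ref{eq:H^k_assumptions} secures when applied with $k=\lfloor d/2\rfloor+1$. The resulting requirement $l>2(k+1)+d/2$ reads $l>\lfloor 3d/2\rfloor+4$ for even~$d$ and $l>\lfloor 3d/2\rfloor+3$ for odd~$d$, consistently with the parenthetical in~\ref{eq:reg_ass_final}. Once the $\mathcal{H}^{k}$-estimate is in place, the Hölder bound follows from the standard embedding $\mathcal{H}^{k}(W\times I)\hookrightarrow C^{0,\gamma}(\overline{W\times I})$ on the ambient $d$-dimensional cylinder: with $k=\lfloor d/2\rfloor+1$ the surplus $k-d/2$ equals $1$ for even~$d$ (the borderline case, permitting any $\gamma\in(0,1)$) and $1/2$ for odd~$d$ (forcing $\gamma=1/2$), and the embedding constant is independent of~$\varepsilon$, so the $\mathcal{O}(\varepsilon)$ rate transfers from $\mathcal{H}^{k}$ to $C^{0,\gamma}$ intact.
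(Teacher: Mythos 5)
Your proposal is correct and follows essentially the same path as the paper: the higher-order test function $v=(-1)^{|\alpha|}D^{\alpha}\zeta^{2}D^{\alpha}\psi$ restricted to tangential multi-indices, redistribution of $B$ by $|\alpha|-1$ integrations by parts, algebraic recovery of mixed $u$-derivatives by solving the PDE for $\partial_{u}^{2}\psi$ with the $\varepsilon^{2}$ prefactor, and the regularity count $l>2(k+1)+d/2$ at $k=\lfloor d/2\rfloor+1$ followed by the Sobolev embedding into $C^{0,\gamma}$. Your parity arithmetic for $\langle A5\rangle$ and the Hölder exponents $\gamma$ also agrees with the paper.
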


The second statement follows directly from 
the general Sobolev embedding 
$\mathcal{H}^k(W \times I) \hookrightarrow C^{0,\gamma}(\overline{W \times I})$,
cf.~\cite[Thm.~5.4]{adams}.

\subsection{Interior $C^{0,1}$-convergence in the transverse direction}
This subsection is strong\-ly inspired by a similar one in~\cite{FK4}. 
However, we had to somehow refine the supersolution used within the proof.

Let $B_{1},B_{2}$ be two 
$(d-1)$-dimensional
balls with the common centre 
$x_{\shortparallel,0}$ 
and radii $r_{1},r_{2}$, respectively, such that $r_{1}<r_{2}$ and $B_{2}$ lies
inside the range of a local coordinate map on $\Sigma$. Then 
$$h(x_{\shortparallel}):=\chi_{\overline{B_{2}}\setminus
B_{1}}(x_{\shortparallel})(|x_{\shortparallel}-x_{\shortparallel,0}|-r_{1})^3
\ \in \ C^{2}(\overline{B_{2}}).$$

Now, rewrite \eqref{eq:psi_diff} as
$$M\psi=F_{\varepsilon}$$
with
\begin{align*}
 &M :=
  -\varepsilon^2 |g|^{-1/2}\partial_{\mu}|g|^{1/2}G^{\mu\nu}\partial_{\nu}
  -\partial_{u}^{2} \,, \\
 &F_{\varepsilon} :=
  \varepsilon^2 (\lambda_{n}-V)\psi+\varepsilon^2 f_{\varepsilon}+E_{1}\psi \,.
\end{align*}
Let us introduce, on $B_{2}\times I$,
$$w(x_{\shortparallel},u) :=
\beta\left(\mathrm{e}^{-(1+h(x_{\shortparallel}))^{-1}}-\mathrm{e}^{-|u\pm
2|}\right),$$
where $\beta>0$ will be determined below. We have
\begin{equation*}
 Mw=\beta\big(-\varepsilon^2
|g|^{-1/2}\partial_{\mu}|g|^{1/2}G^{\mu\nu}\partial_{\nu}\mathrm{e}^{-(1+h(x_{
\shortparallel}))^{-1}}+\mathrm{e}^{-|u\pm 2|}\big)\geq
\frac{\beta}{2}\mathrm{e}^{-3}
\end{equation*}
for all $\varepsilon$ smaller than some sufficiently small $\varepsilon_{1}$.
Furthermore, 
\begin{equation}\label{eq:boundary_est}
w|_{\partial B_{2}\times
I}(x_{\shortparallel},u)\geq\beta\big(\mathrm{e}^{-(1+(r_{2}-r_{1})^3)^{-1}}
-\mathrm{e}^{-1}\big)=:\beta L
\end{equation}
and $w\geq 0$ on $\overline{B_{2}\times I}$.

If we set (under the assumption that the both norms are finite)
$$
  \beta:=
\max\left\{2\mathrm{e}^3\|F_{\varepsilon}\|_{C(\overline{B_{2}\times
I})},L^{-1}\|\psi\|_{C(\overline{\partial B_{2}\times I})} \right\},$$
then
\begin{align*}
 Mw&\geq M(\pm\psi) &&\text{in}\quad B_{2}\times I \,,\\
 w&\geq\pm\psi &&\text{on}\quad \partial(B_{2}\times I) \,,
\end{align*}
which implies
$$|\psi|\leq w\quad \text{on } \overline{B_{2}\times I}$$
in virtue of the maximum principle \cite[Chap.2, Theo. 6]{PW}. Namely for all
$(x_{\shortparallel},u)\in B_{1}\times I$,
\begin{equation*}
 |\psi(x_{\shortparallel},u)|\leq
w(x_{\shortparallel},u)=\beta\big(\mathrm{e}^{-1}-\mathrm{e}^{-|u\pm
2|}\big)=\beta\mathrm{e}^{-1}\big(1-\mathrm{e}^{\mp
u-1}\big)\leq\beta\mathrm{e}^{-1}|u\pm 1|.
\end{equation*}

Under the assumptions  of Theorem \ref{theo:Hk_conv},
$\beta=\mathcal{O}(\varepsilon)$, and we have
\begin{theorem}
Assume \ref{eq:eps_bound}, \ref{eq:res_conv_ass}, and \ref{eq:reg_ass_final}.
Let $B$ be an open ball that lies inside the range of a local coordinate map on
$\Sigma$. Then constants $\varepsilon_{2}>0$ and $K$ that depend on $\Sigma$ and
$B$ (and in particular on $\dist(\partial B,\partial\Sigma)$) exist such that
for all $\varepsilon<\varepsilon_{2}$,
\begin{equation}\label{eq:trans_der}
 \sup_{(x_{\shortparallel},u)\in B\times
I}\frac{|\psi(x_{\shortparallel},u)|}{|u\pm 1|}\leq K\varepsilon.
\end{equation}
\end{theorem}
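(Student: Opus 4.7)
The plan is simply to assemble the pieces already prepared immediately above the statement: the supersolution $w$, the maximum-principle comparison $|\psi|\leq w$, and the elementary bound $w(x_\shortparallel,u)\leq\beta\mathrm{e}^{-1}|u\pm 1|$ on $B_1\times I$. What remains to check is that the two ingredients determining $\beta=\max\{2\mathrm{e}^3\|F_\varepsilon\|_{C(\overline{B_2\times I})},\,L^{-1}\|\psi\|_{C(\overline{\partial B_2\times I})}\}$ are both of order $\varepsilon$, and that the geometric constant $L$ in~\eqref{eq:boundary_est} can be fixed independently of $\varepsilon$ once the target ball $B$ is given.

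Given an open ball $B$ as in the statement, I would first pick two concentric balls $B\subset B_1\subset B_2$ with $\overline{B_2}$ still contained in the range of the same local coordinate chart (and, by shrinking $B_2$ if necessary, with $\dist(\partial B_2,\partial\Sigma)>0$). With $r_1,r_2$ the radii of $B_1,B_2$ this fixes $L=\mathrm{e}^{-(1+(r_2-r_1)^3)^{-1}}-\mathrm{e}^{-1}>0$ depending only on $B$ and $\Sigma$. The second entry in the maximum defining $\beta$ is then $\mathcal{O}(\varepsilon)$ directly from the $C^0$-conclusion of Theorem~\ref{theo:Hk_conv}, since $\overline{\partial B_2\times I}$ is compact and lies inside a coordinate chart.

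For the first entry, I would split $F_\varepsilon=\varepsilon^2(\lambda_n-V)\psi+\varepsilon^2 f_\varepsilon+E_1\psi$. The terms $\varepsilon^2(\lambda_n-V)\psi$ and $E_1\psi$ are $\mathcal{O}(\varepsilon)$ in $C(\overline{B_2\times I})$ again by Theorem~\ref{theo:Hk_conv}, since $V$ is bounded on the chart by Remark~\ref{rem:metric_cont} and $\lambda_n$ converges. For the remaining piece, recall that
\[
 f_\varepsilon=|g|^{-1/2}\partial_\mu|g|^{1/2}a^{\mu\nu}\partial_\nu\psi_n^0+(V_{\mathrm{eff}}-V+\lambda_n-\lambda_n^0)\psi_n^0,
\]
and the bounds $\sup_{U\times I}|a^{\mu\nu}|$, $\sup_{U\times I}|\partial_\mu a^{\mu\nu}|$, $\|V_{\mathrm{eff}}-V\|_\infty$ and $|\lambda_n-\lambda_n^0|$ are all $\mathcal{O}(\varepsilon)$ by Remark~\ref{rem:metric_cont} and~\eqref{eq:pot_bound}; combined with the $C^1$-control of $\psi_n^0=\phi_n\otimes\chi_1$ (which, thanks to~\ref{eq:reg_ass_final} and standard elliptic regularity for $h_{\mathrm{eff}}$, is uniformly bounded in the chart), this gives $\|f_\varepsilon\|_{C(\overline{B_2\times I})}=\mathcal{O}(\varepsilon)$, hence $\|\varepsilon^2 f_\varepsilon\|_{C(\overline{B_2\times I})}=\mathcal{O}(\varepsilon^3)$.

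Putting it together, $\beta=\mathcal{O}(\varepsilon)$, and the already established pointwise inequality $|\psi(x_\shortparallel,u)|\leq\beta\mathrm{e}^{-1}|u\pm 1|$ for $(x_\shortparallel,u)\in B_1\times I$ restricts to $B\times I$ to give~\eqref{eq:trans_der} with $K:=\mathrm{e}^{-1}\sup_{\varepsilon<\varepsilon_2}(\beta/\varepsilon)$. The only non-bookkeeping step is the $C^0$ control of $f_\varepsilon$, and hence of the supersolution-forcing term, which is the natural place where the regularity hypothesis~\ref{eq:reg_ass_final} is consumed; everything else is a direct reading of the construction preceding the theorem.
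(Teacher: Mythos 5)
Your proposal is correct and follows the same approach as the paper: the paper constructs the supersolution $w$ and the comparison $|\psi|\leq w$ before the theorem statement, and then simply remarks ``Under the assumptions of Theorem~\ref{theo:Hk_conv}, $\beta=\mathcal{O}(\varepsilon)$'' without spelling out the estimate for $\|F_\varepsilon\|_{C(\overline{B_2\times I})}$; you have filled in exactly that bookkeeping. One tiny slip worth flagging: when bounding $f_\varepsilon$ you invoke ``$C^1$-control of $\psi_n^0$'', but after expanding $|g|^{-1/2}\partial_\mu|g|^{1/2}a^{\mu\nu}\partial_\nu\psi_n^0$ a term $a^{\mu\nu}\partial_\mu\partial_\nu\psi_n^0$ appears, so you really need $C^2$-control of $\phi_n$ (which is of course available under~\ref{eq:reg_ass_final} by elliptic regularity for $h_{\mathrm{eff}}$); this does not affect the conclusion.
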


\begin{remark}[$C^{0,1}$-convergence in the transverse direction up to the
boundary for $d=2,3$]\label{rem:C_conv} Once we have $\mathcal{H}^2$-convergence
up to the boundary, we can easily modify argument above to prove also
$C^{0,1}$-convergence in the transverse direction up to the boundary in the case
$d=2,3$. It is sufficient to change $B_{j}$ for some half-balls 
in the range of a local boundary coordinate map.
\eqref{eq:boundary_est} then holds only on the non-flat component of $\partial B_{2}$.
However on the remaining component of the
boundary, $\psi=0$, thus it is also bounded above by $w(\geq 0)$.
\end{remark}

\section{Local convergence of nodal sets}\label{Sec.nodal}
In this section we fix $N\in\N$ and consider only $\varepsilon<\varepsilon(N)$,
where $\varepsilon(N)$ is given in Remark \ref{rem:H_0_spec}.

\begin{lemma}\label{lem:cut}
Let $\mathcal{N}\equiv\mathcal{N}(\phi_n)$ be the nodal set of $\phi_{n}$ and
$B$ be an open ball inside the range of a local coordinate map on $\Sigma$ which
is split by $\mathcal{N}$ 
to exactly two (open) non-empty components
$D_{1}$ and $D_{2}$ such that $\phi_{n}(x_{\shortparallel})\neq 0$ 
for all $x_{\shortparallel}\in D_{i}$. Then $\phi_{n}$ is positive on $D_{1}$
and negative on $D_{2}$ (or vice versa, but we will set the notation in the
manner that the former is always true).
 Moreover, if there exists a ball in $D_{2}$ that touches $\mathcal{N}$ at
$x_{\shortparallel}\in\mathcal{N}\cap B$  we have
 $$\frac{\partial\phi_{n}}{\partial
n_{x_{\shortparallel}}}(x_{\shortparallel})>0,$$
 where $n_{x_{\shortparallel}}$ is the normal vector to $\mathcal{N}$ at
$x_{\shortparallel}$ pointing out from $D_{2}$.
\end{lemma}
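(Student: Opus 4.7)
The plan is to establish the sign assignment by the strong maximum principle and the normal-derivative inequality by the Hopf boundary point lemma, both applied to the elliptic equation $-\Delta_g \phi_n + (V_{\mathrm{eff}}-\sigma_n)\phi_n = 0$ that $\phi_n$ satisfies on the chart containing $B$. Under assumption~\ref{eq:reg_ass_final}, classical elliptic regularity on $\Sigma$ makes $\phi_n$ at least $C^2$ on $\overline{B}$, so on each connected component $D_i$ the nonvanishing continuous function $\phi_n$ has a fixed sign. To see that the signs on $D_1$ and $D_2$ are actually opposite, I would argue by contradiction: if $\phi_n$ had, say, the same nonnegative sign on both $D_1$ and $D_2$, then by continuity $\phi_n \geq 0$ on all of $B$, and $\phi_n$ would attain its interior minimum $0$ at any point of $\mathcal{N} \cap B$. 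The strong maximum principle then forces $\phi_n \equiv 0$ on $B$, contradicting $\phi_n \neq 0$ on $D_i$. This fixes the convention $\phi_n > 0$ on $D_1$, $\phi_n < 0$ on $D_2$.

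For the normal-derivative statement I would apply Hopf's boundary point lemma (in the form of \cite[Chap.~2]{PW}) inside the ball of $D_2$ that touches $\mathcal{N}$ at $x_\shortparallel$. In local coordinates the principal part $-\Delta_g$ is uniformly elliptic with bounded coefficients on $\overline{B}$; the function $\phi_n$ is nonpositive on the touching ball, vanishes at the touching point $x_\shortparallel$, and the interior-ball condition at $x_\shortparallel$ is precisely the geometric hypothesis. Since $\phi_n$ is not identically zero on $D_2$, Hopf's lemma then asserts that the derivative of $\phi_n$ at $x_\shortparallel$ in the direction of the outward normal to $D_2$ (which coincides with the stated $n_{x_\shortparallel}$) is strictly positive.

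The only step demanding care, and the single obstacle common to both halves of the argument, is that the zeroth-order coefficient $V_{\mathrm{eff}} - \sigma_n$ has no definite sign, whereas the classical formulations of the strong maximum principle and of Hopf's lemma require a nonpositive zeroth-order coefficient. The standard remedy is to decompose $V_{\mathrm{eff}}-\sigma_n = c^+ - c^-$ with $c^{\pm}\geq 0$ and to move the term $c^+ \phi_n$ to the right-hand side; at a nonpositive interior minimum (for the strong maximum principle) or a nonnegative boundary maximum (for Hopf), this transferred term has the favourable sign, and the resulting operator has the nonpositive zeroth-order coefficient $-c^-$ required by the classical versions of the principles. This device is entirely textbook and I do not expect any further subtlety to arise.
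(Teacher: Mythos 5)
Your overall strategy is exactly the paper's: the strong maximum principle for the sign dichotomy, Hopf's boundary-point lemma for the normal derivative, and a positive/negative splitting of the zeroth-order coefficient to handle its indefinite sign. However, the one step you yourself flag as ``demanding care'' contains a sign error that, taken literally, invalidates both applications. After writing $V_{\mathrm{eff}}-\sigma_n = c^+ - c^-$ with $c^\pm\geq 0$, the eigenequation is $\Delta_g\phi_n - c^+\phi_n + c^-\phi_n = 0$. You propose to move $c^+\phi_n$ to the right-hand side, leaving the operator you describe as $-\Delta_g - c^-$ with ``nonpositive zeroth-order coefficient $-c^-$.'' But the Protter--Weinberger theorems you invoke require the second-order part to be \emph{positive} definite; after the necessary normalisation your operator becomes $\Delta_g + c^-$, whose zeroth-order coefficient is $c^-\geq 0$ --- the \emph{wrong} sign --- and the transferred term $c^+\phi_n$ then also has the wrong sign at a nonpositive interior minimum of $\phi_n\geq 0$. (In the concrete case $V_{\mathrm{eff}}-\sigma_n\equiv -1$, i.e.\ $c^+=0$, $c^-=1$, your version reduces to applying the strong maximum principle to $\Delta_g\phi_n+\phi_n=0$, which is false: the Bessel function $J_0$ solves this in the plane and has a strictly positive interior maximum without being constant.)

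The correct move is the opposite one, which is exactly what the paper does: keep $-c^+\phi_n$ with the operator and transfer $-c^-\phi_n$, giving
$\bigl(\Delta_g-(V_{\mathrm{eff}}-\sigma_n)_+\bigr)\phi_n = -(V_{\mathrm{eff}}-\sigma_n)_-\phi_n.$
Now the zeroth-order coefficient $-(V_{\mathrm{eff}}-\sigma_n)_+\leq 0$ is nonpositive, and under the tentative sign assumption on $\phi_n$ the right-hand side has the sign needed for Theorems 6 and 8 of \cite[Chap.~2]{PW}, for the interior maximum principle and for Hopf's lemma at the touching point respectively. With this one-line correction (swap which half of the split stays with the operator) your argument becomes identical to the paper's.
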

\begin{proof}
Assume that $\phi_{n}$ does not change its sign on $B$, e.g. $\phi_{n}\leq 0$.
Thus $\phi_{n}$ attains its maximum $(=0)$ at all points of $\mathcal{N}\cap B$.
We have
$$\Delta_{g}\phi_{n}-(V_{\mathrm{eff}}-\sigma_n)_{+}\phi_{n}=-(V_{\mathrm{eff}}
-\sigma_n)_{-}\phi_{n}\geq 0$$
on $B$. 
The maximum principle \cite[Chap.~2, Theo.~6]{PW} now says that if $\phi_{n}$
attains a non-negative maximum $M$ at an interior point of~$B$ 
then $\phi_{n}\equiv M=0$ on~$B$.  
But this implies that $\phi_n\equiv 0$ on $\Sigma$ which is not possible,
because $\phi_n$ is supposed to be an eigenfunction.

Theorem~8 of \cite[Chap.~2]{PW} applied on~$D_{2}$ 
then implies the second assertion of the lemma.
\end{proof}

Recall that in general the nodal set for a solution $u$ of an elliptic equation
in $\Omega\subset\R^m$ decomposes into a disjoint union of a $(m-1)$-dimensional
manifold (that contains all the points where $|\nabla u|> 0$) and a closed
countably $(m-2)$-rectifiable subset (that contains all the points where
$|\nabla u|=0$), cf.~\cite{Caffarelli-Friedman_1985} and
\cite{Hardt-Simon_1989}. The latter component is called \emph{the singular set}.
Since the eigenfunction $\phi_n$ is at least $C^3$-smooth under our regularity
assumptions, the former component of $\mathcal{N}(\phi_n)$ is surely a
$C^3$-smooth manifold, and so the assumptions of  Lemma \ref{lem:cut} are
fulfilled away from the singular set.

In particular, for any $z_{\shortparallel}\in \mathcal{N}(\phi_n)$ such that
$|\nabla{\phi_n}(z_{\shortparallel})|>0$, there exists a ball $B$ such that
$z_{\shortparallel}\in B$, $|\nabla{\phi_n}(x_{\shortparallel})|>0$ for all
$x_{\shortparallel}\in B$ , and Lemma \ref{lem:cut} holds true with this $B$. 
We will denote by $\tau_\delta$ a closed tubular neighbourhood 
about a piece of~$\mathcal{N}(\phi_n)$ of radius~$\delta$ contained in $B$. Let
$D_{1}$ and $n_{x_{\shortparallel}}$ be as in Lemma \ref{lem:cut}.
Take $x_{\shortparallel}\in\tau_\delta\cap D_{1}$, then there exists
$x_{\shortparallel}'\in\tau_\delta\cap\mathcal{N}(\phi_n)$ with the property
$x_{\shortparallel}-x_{\shortparallel}'=|x_{\shortparallel}-x_{\shortparallel}
'|n_{x_{\shortparallel}'}$
($x_{\shortparallel}'$ is not given uniquely but we always have
$|x_{\shortparallel}-x_{\shortparallel}'|\geq
\dist(x_{\shortparallel},\mathcal{N}(\phi_n))$).
Using the Taylor series expansion, we have
\begin{equation*}
 \phi_{n}(x_{\shortparallel})=\phi_{n}(x_{\shortparallel})-\phi_{n}(x_{
\shortparallel}')=\frac{\partial\phi_{n}}{\partial
n_{x_{\shortparallel}'}}|x_{\shortparallel}-x_{\shortparallel}'|+\mathcal{O}(|x_
{\shortparallel}-x_{\shortparallel}'|^2).
\end{equation*}
Using the explicit form of the remainder in the expansion and the regularity of
$\phi_{n}$, we infer that for $\delta$ small enough there exists a positive
constant $A$ such that
\begin{equation}\label{eq:phi_N_bound}
|\phi_{n}(x_{\shortparallel})|\geq
A\,\dist(x_{\shortparallel},\mathcal{N}(\phi_n))\quad\text{for all
}x_{\shortparallel}\in\tau_\delta. 
\end{equation}

Since $\psi_{n}^{0}=\phi_{n}\otimes\chi_{1}$,
where~$\chi_{1}$ is given by~\eqref{chi1},
we have
\begin{equation*}
|\psi_{n}^{0}(x_{\shortparallel},u)|\geq
A\,\dist(x_{\shortparallel},\mathcal{N}(\phi_{n}))\dist(u,\partial I),\quad
\forall (x_{\shortparallel},u)\in\tau_\delta\times I.
\end{equation*}
Putting this result together with \eqref{eq:trans_der} we have
\begin{equation*}
 \frac{\psi_{n}\,\sgn\phi_{n}}{\dist(u,\partial I)}\geq
A\,\dist(x_{\shortparallel},\mathcal{N}(\phi_{n}))-K\varepsilon,
\end{equation*}
which implies that, in $\tau_\delta\times I$,
\begin{equation*}
\sgn\psi_{n}(x_{\shortparallel},u)=\sgn\phi_{n}(x_{\shortparallel}) 
\end{equation*}
whenever $\dist(x_{\shortparallel},\mathcal{N}(\phi_{n}))>KA^{-1}\varepsilon$.
Consequently, if $\varepsilon<AK^{-1}\delta$ then, for any $u\in I$, $x \mapsto
\psi_n(x,u)$ changes its sign on $\tau_\delta$.
We may extend this result along a compact piece of~$\mathcal{N}(\phi_n)$ to
obtain
\begin{theorem}\label{theo:sign_change}
 Assume \ref{eq:eps_bound}, \ref{eq:res_conv_ass}, and \ref{eq:reg_ass_final}.
Let $\tau_\delta$ be a tubular neighbourhood of radius~$\delta$ (with $\delta$
so small that \eqref{eq:phi_N_bound} holds true in $\tau_\delta$ with some
positive constant $A$) about a compact piece of~$\mathcal{N}(\phi_n)$ that lies
inside the range of a local coordinate map on $\Sigma$ and that does not
intersect the singular set of~$\mathcal{N}(\phi_n)$. Then there exists a
positive constant $\tilde{A}$ such that, for any $u\in I$, 
$x \mapsto \psi_n(x,u)$ changes its sign on $\tau_\delta$, whenever
$\varepsilon<\tilde{A}\delta$.
\end{theorem}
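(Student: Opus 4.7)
The plan is to combine several pointwise estimates already established in the preceding subsections into a single inequality on $\tau_\delta\times I$, and then to exploit that $\tau_\delta$ reaches both sides of $\mathcal{N}(\phi_n)$ at distance approaching~$\delta$. The argument is essentially local in $\Sigma$ because it rests on the factorisation $\psi_n^0=\phi_n\otimes\chi_1$ supplied by Remark~\ref{rem:H_0_spec}; the last step will therefore be a compactness/covering argument to make the constants uniform along the compact piece of nodal set.

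The first observation is that $\chi_1(u)=\cos(\pi u/2)=\sin(\pi(1-|u|)/2)\geq\dist(u,\partial I)$ for $u\in\overline I$ (using $\sin t\geq(2/\pi)t$ on $[0,\pi/2]$). Multiplying this by the linear nondegeneracy~\eqref{eq:phi_N_bound} yields
\[
|\psi_n^0(x,u)|\geq A\,\dist(x,\mathcal{N}(\phi_n))\,\dist(u,\partial I)\qquad\forall(x,u)\in\tau_\delta\times I,
\]
with $\sgn\psi_n^0(x,u)=\sgn\phi_n(x)$ since $\chi_1>0$ on $I$. Next, the transverse Lipschitz bound~\eqref{eq:trans_der}, applied on a ball $B$ containing the section of $\tau_\delta$, gives $|(\psi_n-\psi_n^0)(x,u)|\leq K\varepsilon\,\dist(u,\partial I)$ pointwise. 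Combining the two estimates,
\[
\sgn(\phi_n(x))\,\psi_n(x,u)\geq\dist(u,\partial I)\bigl(A\,\dist(x,\mathcal{N}(\phi_n))-K\varepsilon\bigr),
\]
so $\sgn\psi_n(\cdot,u)=\sgn\phi_n$ at every point with $\dist(x,\mathcal{N}(\phi_n))>K\varepsilon/A$. Since $\tau_\delta$ reaches distances arbitrarily close to $\delta$ on both sides of $\mathcal{N}(\phi_n)$ and $\phi_n$ takes opposite signs on those sides by Lemma~\ref{lem:cut}, setting $\tilde A:=A/K$ and requiring $\varepsilon<\tilde A\delta$ forces $x\mapsto\psi_n(x,u)$ to change sign on $\tau_\delta$ for every fixed $u\in I$.

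The main obstacle is that \eqref{eq:trans_der} is stated for a single ball $B$, with a constant $K$ depending on $B$ and in particular on $\dist(\partial B,\partial\Sigma)$, whereas the theorem concerns a compact piece $\Gamma$ of $\mathcal{N}(\phi_n)$ that need not fit inside any single such ball. I would resolve this by covering $\Gamma$ by finitely many balls $B_1,\ldots,B_J$ (all contained in the chosen coordinate chart and disjoint from the singular set) on which the single-ball analysis above applies, replacing $K$ by $K:=\max_jK_j$ (finite by compactness of $\Gamma$ and by the continuous dependence of~$K_j$ on the geometric data of $B_j$) and keeping the uniform $A$ furnished by the hypothesis on $\tau_\delta$. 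With these uniform constants in place, the pointwise inequality of the previous paragraph holds across $\tau_\delta$, and setting $\tilde A:=A/K$ yields the claimed threshold $\varepsilon<\tilde A\delta$, completing the proof.
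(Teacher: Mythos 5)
Your proposal is correct and follows essentially the same route as the paper: the lower bound on $|\psi_n^0|$ from \eqref{eq:phi_N_bound} together with $\chi_1(u)\geq\dist(u,\partial I)$, the transverse Lipschitz estimate \eqref{eq:trans_der}, the sign comparison, and Lemma~\ref{lem:cut} to see that $\phi_n$ takes both signs in $\tau_\delta$. The finite covering argument you spell out in the last paragraph is precisely what the paper compresses into the phrase ``we may extend this result along a compact piece of $\mathcal{N}(\phi_n)$,'' so you have simply made that step explicit.
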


This immediately implies Theorem \ref{Thm.Conj}, because $\mathcal{N}(\phi_n)$
can not equal to its singular set.

\section{Some more results in special cases}\label{Sec.special}
By the theorem of Courant's, $\phi_n$ has at most $n$ nodal domains. Let us
denote them $N_i,\, i=1,\ldots,m$, where $m\leq n$. 
Consider the following special cases.

\subsection{$\boldsymbol{\Sigma}$ compact}
%
For $1>\delta>0$, let us define 
$$
  N_i(\delta):=\{x\in N_i \,|\, \dist{(x,\partial
  N_i})\geq\delta\}
$$ 
and
$m_i(\delta):=\inf_{N_i(\delta)}|\phi_n|\cos{((1-\delta)\pi/2)}$. 
Clearly,
$m_i(\delta)>0$. Using Theorem~\ref{theo:Hk_conv} we infer that there exist
positive constants $\varepsilon_\mathrm{lim}$ and $\tilde{K}$ such that, for all
$\varepsilon\leq\varepsilon_\mathrm{lim}$, we have 
\begin{equation}\label{eq:unif_conv}
\|\psi\|_{C(\overline{\cup_{i=1}^{m}N_i(\delta)\times
I})}\leq\tilde{K}\varepsilon.
\end{equation}
If $d=2,3$ we have even stronger result due to Remark \ref{rem:C_conv},
\begin{equation*}
\|\psi\|_{C(\overline{\Sigma\times I})}\leq\tilde{K}\varepsilon,\quad
\forall\varepsilon\leq\varepsilon_\mathrm{lim},
\end{equation*}
with $\tilde{K}$ and $\varepsilon_\mathrm{lim}$ being $\delta$-independent (on
the contrary to the generic case).
Using \eqref{eq:unif_conv} we obtain
$$\psi_n \sgn{\phi_n}\geq|\phi_n\chi_1|-\tilde{K}\varepsilon\geq
m_i(\delta)-\tilde{K}\varepsilon$$
in $N_i(\delta)\times (-1+\delta,1-\delta)$. We conclude that for 
\begin{equation}\label{eq:dom_conv_bound}
\varepsilon<\tilde{K}^{-1}\min{\{m_i(\delta),\ i=1,\ldots, n\}},
\end{equation}
$\sgn{\psi_n}(x_{\shortparallel},u)=\sgn{\phi_n}(x_{\shortparallel})$ in
$\cup_{i=1}^{m}N_i(\delta)\times (-1+\delta,1-\delta)$. This means that, for all sufficiently small 
$\varepsilon$, the number of nodal domains of $\psi_n$ cannot be smaller than $m$ and that $N_i(\delta)\times
(-1+\delta,1-\delta)$ is contained in a nodal domain of $\psi_n$, which  will be henceforth called the 
$i$th nodal domain of $\psi_n$.
Indeed, if the number of nodal domains of $\psi_n$ was smaller that $m$, it
would contradict Theorem \ref{theo:sign_change}. We conclude that the Lebesgue measure of the symmetric
difference of the $i$th nodal domain of $\psi_n$ and $N_i\times I$ tends to zero
as $\varepsilon\to 0$, and that $\mathcal{N}(\psi_n)$ lies in 
$L(\delta):=\Sigma\times I\setminus\cup_{i=1}^{m}N_i(\delta)\times(-1+\delta,1-\delta)$
for all $\varepsilon\leq\varepsilon_\mathrm{lim}$.

\subsection{$\boldsymbol{n=2, \Sigma$ compact and closed,
$\boldsymbol{\mathcal{N}(\phi_n)}}$ has empty singular set}
%
For any second eigenfunction there are exactly two nodal domains. Since $\Sigma$
is compact and the singular set of $\mathcal{N}(\phi_2)$ is empty, we can use
Theorem~\ref{theo:sign_change} along the whole nodal set $\mathcal{N}(\phi_2)$.
Then for all $\varepsilon<\min{\{\tilde{A}_{i}\delta_i\}}$,
$\mathcal{N}(\psi_2)$ is localised in the Cartesian product of the tubular neighbourhood of radius
$\max{\{\delta_i\}}$ about $\mathcal{N}(\phi_2)$ times $I$. So the nodal set of $\psi_2$
converges in the Hausdorff sense to the set $\mathcal{N}(\phi_2)\times I$, as
$\varepsilon\to 0$. Similarly, both the nodal domains of $\psi_2$ converge in
the Hausdorff sense to the Cartesian product of the nodal domains of $\phi_2$
times $I$.

In fact, one can locate
the nodal set and the nodal domains of~$\psi_n$ in a 
much more precise way
if $d=2$ (see~\cite{FK4}). 
The reason is that in that case we have much more explicit knowledge about
$\phi_n$, since it is just an eigenfunction of some Sturm-Liouville operator. 

\subsection*{Acknowledgement}
One of the authors (M.T.) would like thank to the Institute Mittag-Leffler
(Djursholm, Sweden), where this work was started, 
for its kind hospitality. The work has been partially supported
by the project RVO61389005
and the grants No.\ 13-11058S and 14-06818S
of the Czech Science Foundation (GA\v{C}R).

%
\providecommand{\bysame}{\leavevmode\hbox to3em{\hrulefill}\thinspace}
\providecommand{\MR}{\relax\ifhmode\unskip\space\fi MR }
\providecommand{\MRhref}[2]{%
  \href{http://www.ams.org/mathscinet-getitem?mr=#1}{#2}
}
\providecommand{\href}[2]{#2}

\end{document}